\author{Stephen Bedford}
\title{Global minimisers of cholesteric liquid crystal systems}
\theoremstyle{plain}
\newtheorem{theorem}{Theorem}
\newtheorem{proposition}[theorem]{Proposition}
\newtheorem{definition}[theorem]{Definition}
\newtheorem{lemma}[theorem]{Lemma}
\newtheorem{corollary}[theorem]{Corollary}
\newtheorem{remark}[theorem]{Remark}
\newtheorem*{theorem*}{Theorem}
\newtheorem*{proposition*}{Proposition}
\newtheorem*{definition*}{Definition}
\newtheorem*{lemma*}{Lemma}
\newtheorem*{corollary*}{Corollary}
\newtheorem*{remark*}{Remark}
\newcommand{\vn}{{\bf n}}
\newcommand{\vv}{{\bf v}}
\newcommand{\vm}{{\bf m}}
\newcommand{\vphi}{\boldsymbol\phi}
\renewenvironment{proof}{{\bf{Proof}\vspace{3 mm}}}{\qed}
\begin{document}

\maketitle
\mathtoolsset{showonlyrefs=true}

\begin{abstract}

In this paper we examine the modelling and minimisation of cholesteric liquid crystals systems within the Oseen-Frank theory. We focus on a cuboid domain with the frustrated boundary conditions $\vn(0,x,y)=(1,0,0)$ and $\vn(1,x,y)=(0,0,1)$. With general elastic constants, we find the unique global minimisers amongst functions of one variable and prove that these are global minimisers of the entire problem if the cholesteric pitch is sufficiently long. Finally we show that our analysis easily translates over the study the global stability of the constant state $\vn(x,y,z) = (0,0,1)$ with unfrustrated boundary conditions.

\end{abstract}

%
%
%
% Introduction
%
%
%

\section{Introduction}\label{section: intro}

The mathematical study of liquid crystals through the calculus of variations has received increasing attention in recent decades. In this area, the main supposition is that the free energy of the systems can be modelled by an integral functional. Unfortunately there is no broad consensus as to the correct form of the free energy density or even what the order parameter should be. The Oseen-Frank theory uses the anisotropic axis of the molecules, the director, to model the energy \cite{oseen1933theory,frank1958theory}. Ericksen's theory uses the director and a scalar order parameter which measures the variance from the director \cite{ericksen1991liquid}. The Landau-de Gennes and Q-Tensor theories use matrix order parameters, which are symmetric and traceless, to properly respect the head-to-tail symmetry of the constituent molecules \cite{de1995physics,ball2010nematic}. In general there has been a great deal of attention on how to best model liquid crystal systems. However there are far fewer mathematical studies of these theories for a given setup. The works of Majumdar, Robbins, Zyskin and Newton are notable exceptions \cite{majumdar2007topology,majumdar2004elastic}. In this paper we use the Oseen-Frank free energy functional and study two specific problems. We will focus on cholesteric liquid crystals because they are more general, and less well understood, than nematic liquid crystals. We consider a liquid crystal sample held in a general cuboid domain     
\begin{equation}\label{0.1}
 \Omega = (-L_1,L_1)\times(-L_2,L_2)\times(0,L_3).
\end{equation}
Then we shall investigate global minimisers with two different sets of boundary conditions on the top and bottom plates. The first has homeotropic boundary conditions on both faces while the second uses planar non-degenerate boundary conditions on the bottom face instead, so that $\vn(x,y,0) = {\bf e}_1$. On the other four faces we will simply impose periodic boundary conditions. To the best of our knowledge, the frustrated boundary condition case has not been studied rigorously with the vectorial calculus of variations before. The other case involving homeotropic anchoring was considered by Gartland et al. \cite{gartland2010electric} in a purely one-dimensional formulation. In contrast, the results that we will prove in Section \ref{section: second problem} apply in three spatial dimensions but they still qualitatively agree with the conclusions from \cite{gartland2010electric}. An important aspect of the Oseen-Frank model that we shall consider is the assumptions on the elastic constants, $K_i$. The general free energy density is given by 
\begin{equation}\label{0.2}
 w(\vn,\nabla \vn) := K_1 \left( \nabla \cdot \vn \right)^2 + K_3 \left( \vn \cdot \nabla \times \vn +t \right)^2 +K_3 \left|\vn \times \nabla \times \vn \right|^2 + \left( K_2 + K_4 \right) \left( \rm{tr}\left( \nabla \vn^2 \right) - \left( \nabla \cdot \vn \right)^2 \right).
\end{equation}
Naturally the values of the elastic constants are an important factor in determining the minimisers. The usual assumption made in the literature, see for example \cite{stewart2004static,virga1994variational}, is that it should be the case that $w(\vn,\nabla \vn)\geqslant 0$, and equals zero if and only if $\vn$ is any fixed rotation of 
\begin{equation}\label{0.3}
 \left(\begin{array}{c} \cos(tz) \\ \sin(tz) \\ 0 \end{array} \right).
\end{equation}
This logically leads to the following set of inequalities, called Ericksen's inequalities \cite{ericksen1966inequalities,virga1994variational},
\begin{equation}\label{0.4}
 \begin{split}
  K_1,K_2,K_3 \geqslant 0,\,\, K_2+K_4 = 0\,\, {\rm{if}}\,\, t\neq 0  \\
  2K_1 \geqslant K_2+K_4,\,\, K_2 \geqslant |K_4|,\,\, K_3 \geqslant 0 \,\, {\rm{if}} \,\, t =0.
 \end{split}
\end{equation}
This logic has possible flaws for two reasons. The first is that it is slightly illogical from a mathematical point of view to derive an integral functional assuming a given minimum of the Lagrangian; it is preferable for the minimum state to emerge naturally from the form of the free energy density. Secondly the two sets of inequalities in \eqref{0.4} do not exhibit a continuous dependence on the cholesteric twist parameter $t$. Nematic liquid crystals are, in some sense, a limit of cholesteric liquid crystals as $t\rightarrow 0$, hence one would expect a set of inequalities which would respect this limit. The restriction that $K_2+K_4=0$, so that we have no contribution from the saddle-splay term, is particularly limiting for cholesterics. Interestingly, Nehring \& Saupe \cite{nehring1971elastic} showed that using a different approach to deriving the cholesteric free energy density, one deduces that $K_2+K_4\neq 0$. In this paper we will keep the contributions from the saddle-splay term and when we apply a one constant approximation, we will suppose that
\begin{equation}\label{0.5}
 K_1=K_2=K_3=K,\,\, K_4=0
\end{equation}
for both nematic and cholesteric liquid crystals.

\vspace{3 mm}

The applications of liquid crystals to computer display designs are well known and therefore any advance in our theoretical understanding of liquid crystals could potentially further their commercial viability. However, one of the main reasons for the abstract interest in cholesteric liquid crystals is that they can form a number of defect structures which are not fully understood analytically. The first of these are quasi-periodic cylindrical patterns (for example, see \cite[Fig. 5]{oswald2000static}) which are part of a large group of structures called cholesteric fingers, of which there are at least four distinct types \cite{oswald2000static,smalyukh2005electric}. These fingers can easily be created in a sample of liquid crystal between two parallel plates with homeotropic anchoring ($\vn = {\bf e}_3$), so long as the height of the cell is sufficiently small. It is generally thought that the fingers arise from the competition of two local energy minimisers; the constant state and a more helical state (an example of which we will find explicitly later in the paper). Near the isotropic transition temperature, some cholesteric liquid crystals form cubic lattices of cylinders called blue phases \cite{kikuchi2002polymer,wright1989crystalline} which possess interesting optical properties. They are called blue phases because the periodicity of the lattice is the same length as the wavelength of visible light, so it reflects a certain visible colour. The first such studied liquid crystals reflected blue light, hence their name. Even more recently, local structures called torons, which comprise a small volume of tightly twisted molecules, have been induced by a high power laser and shown to be stable \cite{smalyukh2010three}. These torons appear to be formed when the building block of the blue phase, the double twist cylinder, is twisted upon itself to create a torus. This structure necessarily results in two defects.

\vspace{3 mm}

Modern technological advances have made it possible to unambiguously reconstruct the director field within any of the above cholesteric structures \cite{smalyukh2005electric}. A experimental technique called fluoresence confocal polarizing microscopy (FCPM) is used to perform this reconstruction. As a result, the molecular makeup of all of these structures have been found either experimentally or through simulations. However there has been little analytical progress using them to find local or global energy minimisers of an entire system. The work which follows in this paper is aimed at being able to predict physically viable states of cholesteric liquid crystal problems.

\vspace{3 mm}

Although we will not be able to prove the stability of multi-dimensional minimisers in this paper directly, we will establish a great deal of groundwork by finding one-dimensional minimisers and investigating their stability. In Section \ref{section: one var mins} we study the Oseen-Frank free energy with the frustrated boundary conditions $\vn|_{z=0}={\bf e}_1$ and $\vn|_{z=L_3} = {\bf e}_3$. We find the unique global minimisers of the one dimensional problem for general elastic constants. From then on we consider the one constant Oseen-Frank energy
\begin{equation}\label{0.6}
 I(\vn) = \int_\Omega \left|\nabla \vn\right|^2 + 2t \vn \cdot \nabla \times \vn +t^2 \, dx.
\end{equation}
In Section \ref{section: long pitch} we prove the first of our two main stability results, Theorem \ref{2:theorem_global_min}: these one-dimensional states are in fact the global minimisers of \eqref{0.6} so long as $t$ is sufficiently small. This analytical prediction is consistent with the notion that cholesterics liquid crystals with suitably long pitch can be considered as a perturbation of nematic liquid crystals. We then investigate the relationship between the saddle-splay term, the function space and periodicity in the lateral directions with Proposition \ref{3: prop periodicity}. This result gives a subtle indication that there may be a function space modelling issue for cholesterics, this idea will be further explored in an upcoming paper \cite{bedford2014function}.

\vspace{3 mm}

In the final section we examine the same problem but with homeotropic boundary conditions on both the top and bottom faces of the cuboid. This problem is a little different in that for any $t\in \mathbb{R}$, $\vn = {\bf e}_3$ is always a solution of the Euler-Lagrange equation. Just as in the frustrated case we prove that $\vn = {\bf e}_3$ is a global minimiser of the problem provided that $t$ is sufficiently small. This is an interesting supplement to the local stability result \cite[Section 7]{bedford2014analysis} which demonstrated that $\vn = {\bf e}_3$ is a strong local minimiser if $t<\pi$ and not a local minimiser when $t>\pi$. All of these stability results give credence to the notion that cholesteric liquid crystals with long pitch can be understood as a perturbation of the more easily understood nematic liquid crystal systems.

%
%
%
% Notation and Preliminaries
% 
% 
% 

\section{Notation and Preliminaries}\label{section: prelims}

Throughout this paper, unless stated otherwise, $\Omega \subset \mathbb{R}^3$ is the cuboid domain 
\begin{equation}\label{1.1}
 \Omega := (-L_1,L_1)\times (-L_2,L_2) \times (0,1).
\end{equation}
We note that we can assume, without loss of generality, that the domain has height 1 because scaling the domain simply corresponds to a linear scaling of the variable $t$. Then the problem that we are considering is the minimisation of 
\begin{equation}\label{1.2}
 I(\vn) := \int_\Omega w(\vn,\nabla \vn) \, dx
\end{equation}
over 
\begin{equation}\label{1.3}
 \mathcal{A} := \left\{ \left.\, \vn \in W^{1,2}\left( \Omega ,\mathbb{S}^2 \right) \, \right|\, \vn|_{z=0} = {\bf e}_1,\,\, \vn|_{z=1} = {\bf e}_3,\,\, \vn|_{x=-L_1} = \vn|_{x=L_1}, \,\, \vn|_{y=-L_2} = \vn|_{y=L_2} \, \right\},
\end{equation}
where 
\begin{equation}\label{1.4}
 w(\vn,\nabla \vn) := K_1\left( \nabla \cdot \vn \right)^2 +K_2 \left( \vn \cdot \nabla\times \vn +t \right)^2 +K_3 \left| \vn\times \nabla \times \vn \right|^2 +\left( K_2 + K_4 \right) \left( {\rm{tr}}(\nabla \vn^2)-(\nabla \cdot \vn)^2 \right).
\end{equation}
The Sobolev space $W^{1,2}\left( \Omega ,\mathbb{S}^2 \right)$ is simply the set of functions $\vn \in W^{1,2} \left( \Omega ,\mathbb{R}^3 \right)$ such that $|\vn|=1$ almost everywhere. In the final section we will briefly discuss local minimisers of this vectorial problem and therefore will need to utilise some definitions and results from that area of mathematics.

\begin{definition}\label{def: strong local min}
 A fucntion $\vn \in \mathcal{A}$ is a strong local minimiser of the functional $I$ if there exists an $\epsilon>0$ such that if $\vm \in \mathcal{A}$ and $||\vn-\vm||_{\infty}<\epsilon$ then $I(\vm)\geqslant I(\vn)$.
\end{definition}

\begin{definition}\label{def: weak local min}
 A fucntion $\vn \in \mathcal{A}$ is a weak local minimiser of the functional $I$ if there exists an $\epsilon>0$ such that if $\vm \in \mathcal{A}$ and $||\vn-\vm||_{1,\infty}<\epsilon$ then $I(\vm)\geqslant I(\vn)$.
\end{definition}

We define the set of test functions of our problem to be $\rm{Var}_\mathcal{A}$ where 
 \begin{equation}\label{1.5}
  {\rm{Var}}_\mathcal{A} := \left\{ \left.\, \vv \in C^\infty \left( {\Omega} , \mathbb{R}^3 \right)\cap W^{1,2}\left( \Omega,\mathbb{R}^3 \right)\, \right|\, \vv|_{z=0} = \vv|_{z=1} = 0,\,\, \vv|_{x=-L_1} = \vv|_{x=L_1},\,\, \vv|_{y=-L_2} = \vv|_{y=L_2} \, \right\}.
 \end{equation}

\begin{theorem}\cite{bedford2014analysis}\label{thm: weak local min}
 Suppose that $\vn \in \mathcal{A} \cap W^{1,\infty}\left(\Omega,\mathbb{R}^3 \right)$ is a weak local minimiser of $I$. Then for every $\vv \in \rm{Var}_\mathcal{A}$, 
 \begin{equation}\label{1.6}
   \left.\frac{d}{d\epsilon} I\left( \frac{\vn+\epsilon \vv}{|\vn+\epsilon\vv|} \right) \right|_{\epsilon=0} = 0\quad \text{and} \quad  \left.\frac{d^2}{d\epsilon^2} I\left( \frac{\vn+\epsilon \vv}{|\vn+\epsilon\vv|} \right) \right|_{\epsilon=0} \geqslant 0.
 \end{equation}

\end{theorem}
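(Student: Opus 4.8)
The plan is to reduce the assertion to the elementary fact that a real‑valued function of one variable which has a local minimum at an interior point and is twice differentiable there has vanishing first derivative and non‑negative second derivative at that point. The only subtlety is that competitors must lie in the constrained class $\mathcal{A}$ of \eqref{1.3}, so one cannot perturb along the linear family $\vn+\epsilon\vv$; instead one perturbs along its radial projection onto $\mathbb{S}^2$, namely $\vn_\epsilon:=(\vn+\epsilon\vv)/|\vn+\epsilon\vv|$. First I would check admissibility: since $|\vn|=1$ a.e.\ and $\vv$ is smooth (we may take it bounded with bounded gradient), for $|\epsilon|$ small we have $|\vn+\epsilon\vv|\geqslant\tfrac12$ a.e., so $\vn+\epsilon\vv$ stays in the fixed compact annulus $A:=\{p\in\mathbb{R}^3:\tfrac12\leqslant|p|\leqslant 2\}$, on which $f(p):=p/|p|$ is smooth with all derivatives bounded. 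Hence $\vn_\epsilon=f(\vn+\epsilon\vv)\in W^{1,2}(\Omega,\mathbb{S}^2)\cap W^{1,\infty}$, and because $\vv$ vanishes on $\{z=0\}\cup\{z=1\}$ and is periodic in $x$ and $y$, $\vn_\epsilon$ inherits the boundary and periodicity data of $\vn$; thus $\vn_\epsilon\in\mathcal{A}$ for all small $|\epsilon|$.

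Next I would show that $\vn_\epsilon\to\vn$ in $W^{1,\infty}$ as $\epsilon\to 0$. Writing $\vn_\epsilon=f(\vn+\epsilon\vv)$ and $\nabla\vn_\epsilon=Df(\vn+\epsilon\vv)\,[\nabla\vn+\epsilon\nabla\vv]$, and using the uniform bounds on $f$, $Df$ over $A$ and on $\vn,\vv$ in $W^{1,\infty}$, a direct estimate gives $\|\vn_\epsilon-\vn\|_{W^{1,\infty}}=O(\epsilon)$. Since $\vn$ is a weak local minimiser, this forces $I(\vn_\epsilon)\geqslant I(\vn)$ for all sufficiently small $|\epsilon|$; equivalently, the scalar function $g(\epsilon):=I(\vn_\epsilon)$ attains a local minimum at $\epsilon=0$.

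It then remains to prove that $g$ is $C^2$ on a neighbourhood of $0$, for then $g'(0)=0$ and $g''(0)\geqslant 0$, which is precisely \eqref{1.6}. For fixed $x$, the integrand $w(\vn_\epsilon,\nabla\vn_\epsilon)$ is a smooth (indeed real‑analytic) function of $\epsilon$: by \eqref{1.4}, $w$ is a quadratic polynomial in $\nabla\vn$ whose coefficients are polynomials in $\vn$, and $\epsilon\mapsto(\vn_\epsilon(x),\nabla\vn_\epsilon(x))$ is smooth. Differentiating these compositions once and twice in $\epsilon$ and invoking again the $W^{1,\infty}$ bounds on $\vn,\vv$ together with the boundedness of $f,Df,D^2f$ on $A$, one finds that $\partial_\epsilon w(\vn_\epsilon,\nabla\vn_\epsilon)$ and $\partial_\epsilon^2 w(\vn_\epsilon,\nabla\vn_\epsilon)$ are bounded in $L^\infty(\Omega)$ uniformly for $\epsilon$ near $0$; as $\Omega$ is bounded they are dominated in $L^1(\Omega)$, so the dominated convergence theorem permits differentiating under the integral sign twice. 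Hence $g\in C^2$ near $0$ and both relations in \eqref{1.6} follow.

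The step I expect to demand the most care is the uniform (in both $x$ and $\epsilon$) control that underlies the $W^{1,\infty}$‑closeness of $\vn_\epsilon$ to $\vn$ and the differentiation under the integral. This is exactly where the hypothesis $\vn\in W^{1,\infty}$ is used: it keeps $f(\vn+\epsilon\vv)$ and its $\epsilon$‑ and spatial derivatives bounded, which would generally fail for $\vn$ merely in $W^{1,2}$ — indeed, for such $\vn$ the second variation need not exist as a classical derivative. A minor additional point is the reduction to bounded test functions $\vv$; this can be arranged from the outset, or circumvented by observing that, once $\vn\in W^{1,\infty}$, the first and second variations extend continuously to all of the test space \eqref{1.5}.
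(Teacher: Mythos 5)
The paper does not prove this theorem: it is quoted verbatim from \cite{bedford2014analysis} and used as a black box, so there is no internal proof to compare your argument against. On its own merits your proposal is the standard and correct argument: radial projection keeps the competitor in $\mathcal{A}$, the $W^{1,\infty}$ hypothesis on $\vn$ gives $\|\vn_\epsilon-\vn\|_{1,\infty}=O(\epsilon)$ so that Definition \ref{def: weak local min} applies, and the quadratic structure of $w$ in \eqref{1.4} together with the uniform bounds on $f$, $Df$, $D^2f$ over the annulus justifies differentiating under the integral sign twice, reducing everything to the one-variable first- and second-order conditions at an interior local minimum. The one point you should tighten rather than defer is the boundedness of $\vv$: the test space \eqref{1.5} only requires $\vv\in C^\infty(\Omega,\mathbb{R}^3)\cap W^{1,2}$, which does not force $\vv\in W^{1,\infty}$, and for unbounded $\vv$ the expression $(\vn+\epsilon\vv)/|\vn+\epsilon\vv|$ can fail to be well defined (the denominator may vanish) for every $\epsilon\neq 0$ on a set of positive measure, so the claimed continuous extension of the variations is really a statement about the limiting quadratic form in $\vv$ (which is indeed $W^{1,2}$-bounded when $\vn\in W^{1,\infty}$) rather than about $I(\vn_\epsilon)$ itself. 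With that caveat made explicit, the proof is complete.
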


\begin{theorem}\cite{bedford2014analysis}\label{thm: strong local min}
 Suppose that $\vn \in \mathcal{A} \cap C^1 \left( \overline{\Omega},\mathbb{R}^3 \right)$. If there exists some $\delta>0$ such that
 \begin{equation}\label{1.7}
  \left.\frac{d}{d\epsilon} I\left( \frac{\vn+\epsilon \vv}{|\vn+\epsilon\vv|} \right) \right|_{\epsilon=0} = 0\quad \text{and} \quad  \left.\frac{d^2}{d\epsilon^2} I\left( \frac{\vn+\epsilon \vv}{|\vn+\epsilon\vv|} \right) \right|_{\epsilon=0} \geqslant \delta ||\vv-(\vv\cdot\vn)\vn||_{1,2}^2
 \end{equation}
 for every $\vv \in \rm{Var}_\mathcal{A}$, then $\vn$ is a strong local minimiser of $I$.

\end{theorem}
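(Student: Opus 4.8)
The plan is to produce an $\varepsilon>0$ such that $I(\vm)\ge I(\vn)$ whenever $\vm\in\mathcal{A}$ and $\|\vm-\vn\|_\infty<\varepsilon$. Write $I'(\vn)[\cdot]$ and $I''(\vn)[\cdot]$ for the first and second variations of $I$ regarded as a functional on $W^{1,2}(\Omega,\mathbb{R}^3)$, disregarding the sphere constraint. Two features of the integrand are decisive: $w$ is a polynomial which is quadratic in $\nabla\vn$ and quadratic in $\vn$; and $\vn\in C^1(\overline\Omega)$, so $\nabla\vn\in L^\infty(\Omega)$ and the coefficients of $I'(\vn)$ and $I''(\vn)$ are bounded on $\overline\Omega$. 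As a preliminary, mollification respecting the lateral periodicity and the vanishing on $z=0,1$, together with the $W^{1,2}$-continuity of $I'(\vn)$ and of the bounded quadratic form $I''(\vn)$, extends the hypothesis \eqref{1.7} from $\mathrm{Var}_\mathcal{A}$ to every $\vv\in W^{1,2}(\Omega,\mathbb{R}^3)$ with $\vv\cdot\vn=0$ a.e., $\vv|_{z=0}=\vv|_{z=1}=0$ and $\vv$ laterally periodic. For such $\vv$ it reads: $I'(\vn)[\vv]=0$, and the constrained second variation — which by the chain rule equals $I''(\vn)[\vv]+I'(\vn)[-|\vv|^2\vn]$, since $\partial_\epsilon^2\big((\vn+\epsilon\vv)/|\vn+\epsilon\vv|\big)$ at $\epsilon=0$ equals $-|\vv|^2\vn$ — is at least $\delta\|\vv\|_{1,2}^2$.

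Fix such an $\vm$ and put $\vphi:=\vm-\vn$, so $\|\vphi\|_\infty<\varepsilon$. Because $w$ is quadratic in its gradient slot, the Taylor expansions of $q\mapsto w(q,\nabla\vn)$, of $q\mapsto D_pw(q,\nabla\vn)$ and of $q\mapsto D^2_{pp}w(q)$ about $q=\vn$ all terminate; substituting $q=\vm$ and $\nabla\vm=\nabla\vn+\nabla\vphi$ gives the exact identity
\[ I(\vm)-I(\vn)=I'(\vn)[\vphi]+\tfrac12\,I''(\vn)[\vphi]+R(\vphi), \]
in which every term of the remainder $R(\vphi)$ contains at least one factor of $\vphi$, at most two gradient factors, and a bounded coefficient — the sole coefficient still involving $\nabla\vn$ being bounded precisely because $\nabla\vn\in L^\infty$. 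Extracting one factor $\|\vphi\|_\infty\le\varepsilon$ from each term and applying Hölder's inequality, $|R(\vphi)|\le C(\vn)\,\varepsilon\,\|\vphi\|_{1,2}^2$.

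Next split $\vphi=\vphi^T+\vphi^N$ into parts tangential and normal to $\vn$. As $|\vm|=|\vn|=1$ forces $\vphi\cdot\vn=-\tfrac12|\vphi|^2$, we have $\vphi^N=-\tfrac12|\vphi|^2\vn$; from $\|\,|\vphi|^2\,\|_{L^2}=\|\vphi\|_{L^4}^2\le\|\vphi\|_\infty\|\vphi\|_{L^2}$ and $\nabla\vn\in L^\infty$ one gets $\|\vphi^N\|_{1,2}\le C(\vn)\varepsilon\|\vphi\|_{1,2}$, hence $\|\vphi\|_{1,2}\le2\|\vphi^T\|_{1,2}$ for $\varepsilon$ small. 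In the identity $I'(\vn)[\vphi^T]=0$ by the extended first-variation hypothesis; $\vphi^N$ coincides with $-\tfrac12|\vphi^T|^2\vn$ up to the quartically small $-\tfrac12|\vphi^N|^2\vn$; and $I'(\vn)[-|\vphi^T|^2\vn]$ is exactly the difference between the constrained and unconstrained second variations in the direction $\vphi^T$. Estimating $I'(\vn)[-\tfrac12|\vphi^N|^2\vn]=O(\varepsilon^2\|\vphi\|_{1,2}^2)$ much as for $R$, and bounding the cross and purely-normal contributions to $I''(\vn)[\vphi]$ by $\|\vphi^N\|_{1,2}\le C(\vn)\varepsilon\|\vphi\|_{1,2}$ and boundedness of the quadratic form, the identity reorganises into
\[ I(\vm)-I(\vn)=\tfrac12\left.\frac{d^2}{d\epsilon^2}I\!\left(\frac{\vn+\epsilon\vphi^T}{|\vn+\epsilon\vphi^T|}\right)\right|_{\epsilon=0}+O\big(\varepsilon\|\vphi^T\|_{1,2}^2\big)\ge\big(\tfrac\delta2-C(\vn)\varepsilon\big)\|\vphi^T\|_{1,2}^2\ge0 \]
once $\varepsilon<\delta/(2C(\vn))$, which is the asserted strong local minimality.

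The genuine obstacle — and the reason \eqref{1.7} is formulated with a \emph{coercive} second variation and with $\vn\in C^1(\overline\Omega)$, not merely a positive second variation — is the mismatch between strong closeness, which only controls $\|\vm-\vn\|_\infty$, and the second variation, which only controls $W^{1,2}$-sized perturbations; a priori $\|\vphi\|_{1,2}$ is not small, and no semicontinuity argument applies. The quadratic structure is exactly what rescues it: quadraticity in $\nabla\vn$ caps every error term at gradient-degree two, so it may be absorbed into the Dirichlet part of the coercive form; quadraticity in $\vn$ removes the cubic and quartic terms in $\vm-\vn$ that would otherwise be uncontrollable; $C^1$-regularity of $\vn$ deals with the single family of error coefficients still involving $\nabla\vn$; and the unit-length constraint makes $\vphi^N$ quadratically small and, up to quartic corrections, absorbable into the constrained second variation. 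The delicate part of the proof is the bookkeeping verifying that no error term survives without a factor of $\varepsilon$.
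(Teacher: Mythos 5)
This theorem is stated in the paper with a citation to \cite{bedford2014analysis} and is not proved here, so there is no in-paper proof to compare against; your argument can only be judged on its own terms. On those terms it is correct, and it is the standard (and almost certainly the intended) route for a functional that is quadratic in $\nabla\vn$: the exact, finitely terminating expansion $I(\vm)-I(\vn)=I'(\vn)[\vphi]+\tfrac12 I''(\vn)[\vphi]+R(\vphi)$, the observation that every term of $R$ carries at most two gradient factors and hence at least one undifferentiated factor of $\vphi$ controlled by $\|\vphi\|_\infty$, the constraint identity $\vphi\cdot\vn=-\tfrac12|\vphi|^2$ making the normal component quadratically small in $W^{1,2}$, and the chain-rule identity converting $I'(\vn)[-|\vphi^T|^2\vn]$ into the gap between the constrained and unconstrained second variations. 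The two places where your write-up leans on assertion rather than proof are (i) the density step extending \eqref{1.7} to tangential $W^{1,2}$ fields, where the continuity of the first-variation correction $\vv\mapsto I'(\vn)[-|P\vv|^2\vn]$ under $W^{1,2}$ convergence needs the $L^1$ convergence of $(P\vv^j)\cdot\nabla(P\vv^j)$ and the zero-trace/periodicity-preserving approximation near $z=0,1$; and (ii) the quartic term $I'(\vn)[-\tfrac12|\vphi^N|^2\vn]$, which must be paired in $L^1$ against the bounded coefficients (extracting $|\vphi|^2\leqslant\varepsilon^2$ in $L^\infty$ and leaving $\vphi\cdot\nabla\vphi\in L^1$) to land at $O(\varepsilon^2\|\vphi\|_{1,2}^2)$ rather than the useless $O(\varepsilon^3\|\vphi\|_{1,2})$. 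Both work out, and both use $\vn\in C^1(\overline{\Omega})$ exactly where you say they do, so I see no genuine gap.
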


\section{One variable minimisers}\label{section: one var mins}

From here on we will be assuming that $t\geqslant 0$ and the reason we can do this is that the sign of $t$ simply determines the direction of the helix that the molecules prefer to form. Thus if $t<0$ we simply perform the change of variables ${\bf x}' = -{\bf x}$ to derive a situation where $t\geqslant 0$. In this section we will assume that ${\bf n}$ is a function of $z$ only, which will simplify the free energy significantly, and to avoid degeneracy we also assume that $K_1,K_2,K_3>0$. For maximum simplification we rewrite the director as a function of its Euler angles $\theta(z),\phi(z)$. By substituting 
\begin{equation}\label{2.39}
 \vn=\left( \begin{array}{c} \cos\phi\cos\theta \\\sin\phi\cos\theta\\\sin\theta \end{array} \right), 
\end{equation}
into \eqref{1.2}, we lose any contribution from the saddle-splay term since it vanishes for functions which depend on only one variable. Then we simplify to find
\begin{equation}\label{2.40}
\begin{split}
 I({\bf n})=4L_1L_2&\int^1\left(K_1\cos^2\theta +K_3\sin^2\theta\right)\theta'^2+\cos^2\theta\left[\left(K_2\cos^2\theta+K_3\sin^2\theta\right)\phi'^2-2tK_2\phi'\right] \\&+t^2K_2\,dz.
\end{split}
\end{equation}
Converting the problem into one involving the Euler angles is convenient from a computational standpoint as it deals with the unit vector constraint but there are regularity issues involved, especially at the poles where $\phi$ is undefined. Intuitively it seems sensible that $\theta$ and $\phi$ should have a comparable regularity to the director field $\vn$. However by simple calculations we can see that 
\begin{equation}\label{2.41}
 \int_{\Omega} |\nabla \vn|^2\,dx=\int_{\Omega}|\nabla \theta |^2+\cos^2\theta\,|\nabla \phi|^2\,dx<\infty \,\,\forall \,\,\vn\in W^{1,2}(\Omega,\mathbb{S}^2).
\end{equation}
This means that formally we might only expect $\nabla\theta,\cos\theta\,\nabla\phi \in L^2(\Omega,\mathbb{R}^3)$ so that $\theta \in W^{1,2}$ but we need not have that $\nabla\phi \in L^2$. The following proposition proves this regularity for $\theta$ and fortunately the precise form of the Lagrangian in \eqref{2.40} will then allow us to deal with the terms involving $\phi$ relatively simply. 

\begin{proposition}\label{ThetaRegularity}
 Let $d \in \mathbb{N}$ and suppose that $\Omega\subset\mathbb{R}^d$ is an open set. If $\vn \in W^{1,2}\left( \Omega,\mathbb{S}^2\right)$ then
 \begin{equation}\label{2.45.1}
  \theta \in W^{1,2}\left( \Omega , \left[ -\frac{\pi}{2} , \frac{\pi}{2} \right] \right)
 \end{equation}
 where $\theta$ is the Euler angle out of the $(x,y)$-plane associated with $\vn$.

\end{proposition}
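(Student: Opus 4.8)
Write $\theta = \arcsin(n_3)$, so that the only issue is that $\arcsin$ fails to be Lipschitz at the poles $n_3 = \pm 1$; the naive chain rule would give $\nabla\theta = \nabla n_3/\sqrt{1-n_3^2}$, whose $L^2$-norm is not controlled by $n_3 \in W^{1,2}$ alone. My plan is to extract the missing control from the pointwise constraint $|\vn| = 1$. The crucial preliminary observation is that, although $s \mapsto \sqrt{1-s^2}$ is not Lipschitz on $[-1,1]$, the function $\rho := \sqrt{n_1^2 + n_2^2}$ is the composition of the $1$-Lipschitz Euclidean norm on $\mathbb{R}^2$ with $(n_1,n_2)\in W^{1,2}$, hence $\rho \in W^{1,2}(\Omega,[0,1])$ with $|\nabla\rho|^2\in L^1(\Omega)$ and $\nabla\rho = 0$ a.e.\ on $\{\rho = 0\}$ (the last point because $\nabla n_1 = \nabla n_2 = 0$ a.e.\ there). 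Since $\rho^2 + n_3^2 = 1$ a.e., differentiating this identity (legitimate as $\rho^2, n_3^2 \in W^{1,2}(\Omega)$) yields $\rho\nabla\rho = -n_3\nabla n_3$ a.e., whence on $\{\rho > 0\}$ one has $|\nabla\rho|^2 = (n_3^2/\rho^2)|\nabla n_3|^2$ and therefore
\begin{equation}
 \frac{|\nabla n_3|^2}{\rho^2} = \frac{(\rho^2 + n_3^2)\,|\nabla n_3|^2}{\rho^2} = |\nabla n_3|^2 + |\nabla\rho|^2 \qquad \text{a.e. on } \{\rho > 0\}.
\end{equation}
Consequently $\int_{\{\rho>0\}} |\nabla n_3|^2/\rho^2\,dx \le \int_\Omega |\nabla n_3|^2 + |\nabla\rho|^2\,dx < \infty$; this is the estimate that tames $\nabla\theta$ near the poles.

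Next I would introduce, for $\epsilon\in(0,1)$, the Lipschitz truncation $g_\epsilon:[-1,1]\to[-\pi/2,\pi/2]$ equal to $\arcsin$ on $\{|s|\le 1-\epsilon\}$ and extended affinely with slope $g_\epsilon'(1-\epsilon) = (2\epsilon-\epsilon^2)^{-1/2}$ for $|s|>1-\epsilon$. Then $g_\epsilon$ is odd and Lipschitz, with $g_\epsilon'(s)^2 \le 1/(1-s^2)$ for all $|s|<1$ (with equality for $|s|\le 1-\epsilon$), while $g_\epsilon \to \arcsin$ pointwise on $[-1,1]$ and, using convexity of $\arcsin$ on $[0,1]$, $0\le g_\epsilon(s)\le \arcsin(s)\le \pi s/2$ for $s\in[0,1]$ (and symmetrically). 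By the standard chain rule for Lipschitz functions of Sobolev maps, $\theta_\epsilon := g_\epsilon(n_3)\in W^{1,2}(\Omega)$ with $\nabla\theta_\epsilon = g_\epsilon'(n_3)\nabla n_3$, where this is understood to vanish on $\{n_3 = \pm 1\} = \{\rho = 0\}$ since $\nabla n_3 = 0$ a.e.\ there. The displayed estimate then gives the uniform bound $\|\nabla\theta_\epsilon\|_{L^2(\Omega)}^2 \le \int_{\{\rho>0\}} |\nabla n_3|^2/\rho^2\,dx \le \int_\Omega |\nabla n_3|^2 + |\nabla\rho|^2\,dx$, independent of $\epsilon$; moreover $|\theta_\epsilon| \le (\pi/2)|n_3| \in L^2(\Omega)$ and $\theta_\epsilon \to \theta$ a.e.

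To finish, dominated convergence (with dominating function $\pi^2 n_3^2 \in L^1(\Omega)$) shows $\theta_\epsilon \to \theta$ in $L^2(\Omega)$, and since $(\theta_\epsilon)$ is bounded in the Hilbert space $W^{1,2}(\Omega)$ it has a weakly convergent subsequence whose weak limit must agree with the strong $L^2$-limit $\theta$; hence $\theta\in W^{1,2}(\Omega)$, with $\|\nabla\theta\|_{L^2}^2 \le \int_\Omega |\nabla n_3|^2 + |\nabla\rho|^2\,dx$ by weak lower semicontinuity. The range statement is immediate because $\arcsin$ maps $[-1,1]$ into $[-\pi/2,\pi/2]$; note also that no regularity or finiteness of $\Omega$ is used. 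The main obstacle is exactly the non-Lipschitz behaviour of $\arcsin$ at $\pm 1$, and everything hinges on the identity above, which trades the singular weight $1/\rho^2$ for $|\nabla n_3|^2 + |\nabla\rho|^2$ and thereby exploits the full unit-length constraint rather than just $n_3\in W^{1,2}$; a secondary technical point requiring care is the a.e.\ vanishing of $\nabla n_3$ on the pole set, which underpins both the chain rule for $g_\epsilon(n_3)$ and the reduction of all the integrals to $\{\rho>0\}$.
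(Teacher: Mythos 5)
Your proof is correct, but it takes a genuinely different route from the paper's. The paper works at ``zeroth order'': it proves the pointwise inequality $|\vn_1-\vn_2|^2 = 2-2\vn_1\cdot\vn_2 \geqslant 2\bigl(1-\cos(\theta_1-\theta_2)\bigr) \geqslant C|\theta_1-\theta_2|^2$, i.e.\ that $\vn\mapsto\arcsin(n_3)$ is Lipschitz \emph{as a function on $\mathbb{S}^2$} even though $\arcsin$ is not Lipschitz on $[-1,1]$, and then transfers $W^{1,2}$-membership from $\vn$ to $\theta$ via the difference-quotient characterisation of Sobolev spaces. You instead exploit the constraint $|\vn|=1$ at first order: differentiating $\rho^2+n_3^2=1$ gives $\rho\nabla\rho=-n_3\nabla n_3$, which trades the singular weight $1/(1-n_3^2)$ for $|\nabla n_3|^2+|\nabla\rho|^2$, and you then conclude by Lipschitz truncation of $\arcsin$ plus weak compactness. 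Both arguments rest on the same underlying fact --- the unit-length constraint is what tames $\arcsin$ at the poles --- but the implementations differ. The paper's version is shorter and avoids all chain-rule bookkeeping on the pole set $\{\rho=0\}$; yours is longer but yields the sharp quantitative conclusion $\|\nabla\theta\|_{L^2}^2\leqslant\int_\Omega|\nabla n_3|^2+|\nabla\rho|^2\,dx\leqslant\|\nabla\vn\|_{L^2}^2$ together with an explicit a.e.\ formula for $\nabla\theta$ (consistent with the decomposition $|\nabla\vn|^2=|\nabla\theta|^2+\cos^2\theta\,|\nabla\phi|^2$ used in \eqref{2.41}), whereas the paper's constant degrades by the factor $C^{-1}=\pi^2/2$. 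The technical points you flag --- the a.e.\ vanishing of gradients on level sets and the validity of the chain rule for Lipschitz compositions --- are exactly the ones that need care in your approach, and you handle them correctly.
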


\begin{proof}
 
 The first element of the proof is to show the following pointwise inequality for any $\vn_1,\vn_2 \in \mathbb{S}^2$:
 \begin{equation}\label{2.45.2}
  \left| \vn_1 -\vn_2 \right|^2 \geqslant C \left| \theta_1 -\theta_2 \right|^2.
 \end{equation}
 We take $\vn_1, \vn_2 \in \mathbb{S}^2$ then in terms of their Euler angles 
 \begin{equation}\label{2.45.3}
  \vn_1 = \left( \begin{array}{c} \cos \theta_1 \cos \phi_1 \\ \cos \theta_1 \sin \phi_1 \\ \sin \theta_1 \end{array} \right) \quad
  \vn_2 = \left( \begin{array}{c} \cos \theta_2 \cos \phi_2 \\ \cos \theta_2 \sin \phi_2 \\ \sin \theta_2 \end{array} \right),
 \end{equation}
 where $\theta_1,\theta_2 \in \left[ -\frac{\pi}{2},\frac{\pi}{2} \right]$ and $\phi_1,\phi_2 \in \left[0,2\pi\right)$.  We substitute \eqref{2.45.3} into the left hand side of \eqref{2.45.2} to find
 \begin{equation}\label{2.45.4}
  \begin{split}
   |\vn_1-\vn_2|^2 & = 2-2\vn_1 \cdot \vn_2 \\
   &= 2-2\sin\theta_1\sin\theta_2-2\cos\theta_1\cos\theta_2 \cos(\phi_1-\phi_2) \\
   & \geqslant 2-2\sin\theta_1\sin\theta_2-2\cos\theta_1\cos\theta_2 \\
   & = 2\left( 1- \cos(\theta_1-\theta_2)\right).
  \end{split}
 \end{equation}
 A simple exercise in L'H\^{o}ptial's rule gives us that 
 \begin{equation}\label{2.45.5} 
  \lim_{x \rightarrow 0} \frac{1-\cos(x)}{x^2} = \frac{1}{2}.
 \end{equation}
 Therefore
 \begin{equation}\label{2.45.6}
  \inf_{x\in [-\pi,\pi]} \frac{1-\cos(x)}{x^2} :=C > 0.
 \end{equation}
 Then by combining \eqref{2.45.6} and \eqref{2.45.4} we see that $|\vn_1-\vn_2|^2 \geqslant C |\theta_1 -\theta_2|^2$. We can now conclude the assertion by using the difference quotient characterisation of $W^{1,2}$ \cite[Thm 10.55]{leoni2009first}. We take our $\vn \in W^{1,2}\left( \Omega,\mathbb{S}^2 \right)$ and let $x_0 \in \Omega$ be a point where $\vn$ is well defined, then 
 \begin{equation}\label{2.45.8}
  \int_\Omega |\vn(x)-\vn(x_0)|^2\,dx \geqslant C\int_\Omega |\theta(x)-\theta(x_0)|^2\,dx <\infty,
 \end{equation}
 so that $\theta \in L^2\left( \Omega,\mathbb{R}\right)$. The form of the inequality \eqref{2.45.2} allows us to go further and say that 
 \begin{equation}\label{2.45.9}
  ||\vn(\cdot + h{\bf e}_j)-\vn(\cdot) ||_{2,\Omega'} \geqslant C^{\frac{1}{2}} ||\theta(\cdot + h{\bf e}_j)-\theta(\cdot) ||_{2,\Omega'}
 \end{equation}
for every $\Omega' \subset\subset \Omega$, $h\in \mathbb{R}$ and $j\in \left\{ 1,\dots,n\right\}$. Therefore as $\vn$ satisfies the difference quotient characterisation, so must $\theta$, which gives us the result.

\end{proof}

Now that we know the precise regularity of $\theta$ we can return to \eqref{2.40}. Clearly with regards to minimisation, the final constant term in \eqref{2.40} is unimportant as its energy contribution is the same for all states. We apply a very direct approach to minimising the contributions from $\phi$. If we look simply at the central term involving $\phi$ we can minimise it formally. To do this we note that
\begin{equation}\label{2.42}
 f(x):=ax^2-bx \geqslant f\left( \frac{b}{2a} \right) = -\frac{b^2}{4a}.
\end{equation}
Thus if we simply set 
\begin{equation}\label{2.44}
 \phi'=\frac{K_2t}{\left(K_2\cos^2\theta+K_3\sin^2\theta\right)}\quad \phi(0)=0,
\end{equation}
then we have minimised the contribution with regards to $\phi'$ and have the following inequality
\begin{equation}\label{2.45}
\begin{split} I({\bf n})&\geqslant 4L_1L_2\int_0^1\left(K_1\cos^2\theta +K_3\sin^2\theta\right)\theta'^2-\frac{K_2^2t^2\cos^2\theta}{\left(K_2\cos^2\theta+K_3\sin^2\theta\right)}+K_2t^2\,dz\\
 &=:4L_1L_2\int_0^1 f(\theta)\theta'^2-g(\theta)+K_2t^2\,dz.
\end{split}
\end{equation}
This reduces the problem to a one-dimensional functional of one variable which is easier to investigate. In order to find its minimum we first need to prove a preliminary lemma whereby we will be able show that it is not energetically favourable for $\theta$ to achieve either of its bounds, $\pm \frac{\pi}{2}$, except at $z=1$. For the lemma we introduce the following notation:
\begin{equation}\label{2.48}
 \begin{split}
&F_{\alpha}(v):=\int_0^{\alpha} f(v)v'^2-g(v)+K_2t^2\,dz,\\
&\mathcal{A}_{\alpha}:=\left\{\,\left.v\in W^{1,2}(0,\alpha)\,\right|\,v(0)=0,\,\,v(\alpha)=\frac{\pi}{2}\,\right\}.
 \end{split}
\end{equation}

\begin{lemma}\label{2:lemma_poles}
 If $0<\alpha<\beta<\infty$ then 
\begin{equation}\label{2.49}
 \inf_{v\in \mathcal{A}_{\alpha}} F_{\alpha}(v) > \inf_{w\in \mathcal{A}_{\beta}} F_{\beta}(w).
\end{equation}
\end{lemma}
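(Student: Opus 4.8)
The plan is to establish the monotone dependence of $\alpha\mapsto\inf_{\mathcal A_\alpha}F_\alpha$ on the interval length, the strictness coming from the fact that a minimiser on $(0,\alpha)$ must move away from the value $0$ immediately. Two features make a longer interval strictly advantageous: one may insert a segment on which the competitor equals $0$ at no energy cost, because $g(0)=K_2t^2$ forces the integrand $-g(0)+K_2t^2$ to vanish there; and one may additionally dilate the transition, which decreases $\int f(v)(v')^2$ at the price of a larger, but still controlled, potential term. I would run both effects simultaneously in a one-parameter family of comparison functions.

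First I would show $F_\alpha$ attains its infimum on $\mathcal A_\alpha$ by the direct method. A minimising sequence $(v_k)$ is bounded in $W^{1,2}(0,\alpha)$, since $f\ge\min\{K_1,K_3\}>0$ controls $\int(v_k')^2$ and $v_k(0)=0$ is fixed; along a subsequence $v_k\rightharpoonup v_\alpha$ in $W^{1,2}$ and $v_k\to v_\alpha$ uniformly on $[0,\alpha]$, so $v_\alpha\in\mathcal A_\alpha$. The term $\int f(v)(v')^2$ is weakly lower semicontinuous (integrand continuous in $(v,v')$ and convex in $v'$), while $\int(K_2t^2-g(v))$ is continuous under uniform convergence; hence $v_\alpha$ minimises $F_\alpha$. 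Write $m(\alpha):=F_\alpha(v_\alpha)$. Standard one-dimensional regularity (the du Bois-Reymond equation, then bootstrapping, using $f>0$ and smoothness of $f,g$) gives $v_\alpha\in C^\infty([0,\alpha])$ solving the Euler--Lagrange equation $2f(v)v''+f'(v)(v')^2+g'(v)=0$.

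The crucial step is that $v_\alpha'(0)\neq0$. Since the Lagrangian is autonomous, the Beltrami quantity $H:=f(v_\alpha)(v_\alpha')^2+g(v_\alpha)-K_2t^2$ is constant on $[0,\alpha]$; evaluating at $z=0$ (using $v_\alpha(0)=0$, $f(0)=K_1$, $g(0)=K_2t^2$) gives $H\equiv K_1\,v_\alpha'(0)^2\ge0$. If $v_\alpha'(0)=0$, then $v_\alpha$ solves the second-order equation above with data $v_\alpha(0)=v_\alpha'(0)=0$; but $g'(0)=0$ (indeed $g$ is even in $\theta$), so $(v,v')=(0,0)$ is a rest point of that equation, and uniqueness for the initial value problem forces $v_\alpha\equiv0$, contradicting $v_\alpha(\alpha)=\pi/2$. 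Hence $v_\alpha'(0)\neq0$. Setting $A:=\int_0^\alpha f(v_\alpha)(v_\alpha')^2\,dz$ and $B:=\int_0^\alpha\bigl(K_2t^2-g(v_\alpha)\bigr)\,dz\ge0$, integration of $H$ over $(0,\alpha)$ gives $A-B=\alpha K_1 v_\alpha'(0)^2>0$, so $A>B\ge0$.

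Finally I would build competitors on $(0,\beta)$: for $\gamma\in[\alpha,\beta]$ put $w_\gamma\equiv0$ on $(0,\beta-\gamma)$ and $w_\gamma(z):=v_\alpha\bigl(\alpha(z-\beta+\gamma)/\gamma\bigr)$ on $(\beta-\gamma,\beta)$, so $w_\gamma\in\mathcal A_\beta$. The flat part contributes nothing because $-g(0)+K_2t^2=0$, and a change of variables on the remainder gives $F_\beta(w_\gamma)=\tfrac{\alpha}{\gamma}A+\tfrac{\gamma}{\alpha}B=:h(\gamma)$. Since $h(\alpha)=A+B=F_\alpha(v_\alpha)=m(\alpha)$ and $h'(\alpha)=(B-A)/\alpha<0$, there is $\gamma\in(\alpha,\beta]$ with $h(\gamma)<m(\alpha)$, whence
\[
\inf_{w\in\mathcal A_\beta}F_\beta(w)\ \le\ F_\beta(w_\gamma)\ =\ h(\gamma)\ <\ m(\alpha)\ =\ \inf_{v\in\mathcal A_\alpha}F_\alpha(v),
\]
which is \eqref{2.49}. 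The main obstacle is the step $v_\alpha'(0)\neq0$: without it the dilation argument yields only the non-strict inequality, and proving it is precisely where one must exploit the structure of $f$ and $g$ — positivity of $f$ together with $g'(0)=0$ — rather than mere positivity of the elastic constants. (An alternative for this step: if equality held in \eqref{2.49}, the flat extension of $v_\alpha$ would be a minimiser on $\mathcal A_\beta$, hence a solution of the Euler--Lagrange equation vanishing on a subinterval; analyticity of such solutions then forces it to vanish identically, again a contradiction.)
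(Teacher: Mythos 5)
Your proof is correct, and although its skeleton (existence and smoothness of a minimiser of $F_\alpha$, the first integral of the autonomous Lagrangian, ruling out the degenerate case, and extension by zero onto the longer interval) coincides with the paper's, the decisive step for \emph{strictness} is genuinely different. Note first that your nondegeneracy condition is exactly the paper's: the Beltrami constant $H=f(v_\alpha)(v_\alpha')^2+g(v_\alpha)-K_2t^2$ equals $C_\alpha-K_2t^2=K_1v_\alpha'(0)^2$, so $v_\alpha'(0)\neq 0$ is precisely the paper's $C_\alpha>K_2t^2$, and both arguments rule out the degenerate case by an ODE uniqueness argument (yours via the second-order equation with rest point $(0,0)$ and $g'(0)=0$, the paper's via the first-order equation). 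From there the paper proves that the minimiser $v_\beta$ is \emph{unique} — by integrating the first integral to an implicit formula and showing the map $C\mapsto\eta(C)$ is strictly monotone — and then gets strictness from the observation that the zero-extended competitor has $v'=0$ on $(0,\beta-\alpha)$ while $v_\beta'$ never vanishes, so the competitor cannot be the unique minimiser. You bypass uniqueness entirely: the dilation family $w_\gamma$ with energy $h(\gamma)=(\alpha/\gamma)A+(\gamma/\alpha)B$, $h(\alpha)=A+B=m(\alpha)$ and $h'(\alpha)=(B-A)/\alpha=-K_1v_\alpha'(0)^2<0$ exhibits an explicit competitor in $\mathcal{A}_\beta$ with energy strictly below $m(\alpha)$. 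Your route is more self-contained and quantitative — it needs only attainment of the infimum on $(0,\alpha)$ and the single fact $A>B\geqslant 0$ — whereas the paper's route does more work but also delivers more: the explicit implicit-function description and uniqueness of the one-variable minimiser, which are reused in Theorem \ref{2:1varmin} to get the ``equality if and only if $\theta=\theta^*$'' statement. If you intend your lemma to feed into that theorem, you would still need to supply the uniqueness argument separately.
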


\begin{proof}

We know that $\inf_{v\in\mathbb{R}} f(v)=\min\left\{K_1,K_3\right\}>0$ so that the Lagrangian is convex with respect to the gradient. Therefore the 1-dimensional theory of the calculus of variations tells us that for each $\alpha$, the problem of minimising $F_{\alpha}$ over $\mathcal{A}_{\alpha}$ has a smooth minimiser \cite[Ch.4.2]{dacorogna2008direct} which satisfies the following Euler-Lagrange equation
\begin{equation}\label{2.50}
 (2f(v)v')'=-g'(v).
\end{equation}
This equation has the first integral
\begin{equation}\label{2.51}
 f(v)v'^2+g(v)=C_{\alpha},
\end{equation}
where $C_{\alpha}$ is a constant. For the consistency of this equation $C_{\alpha}\geqslant K_2t^2$ because
\begin{equation}\label{2.52}
f(v(z))>0,\quad g(v(z))\leqslant K_2t^2,\quad \text{and}\quad g(v(0))=K_2t^2 .
\end{equation}
Importantly we need to eliminate the possibility that $C_{\alpha}=K_2t^2$ so that we can deduce $v'$ has a fixed sign. If $C_{\alpha}=K_2t^2$, then we would be looking to solve 
\begin{equation}\label{2.53}
 v'^2= \frac{K_2 t^2 - g(v)}{f(v)}\quad v(0)=0.
\end{equation}
We know that the minimiser we are searching for is smooth, hence a simple argument looking at intervals where $v'$ has fixed sign shows that the unique solution to \eqref{2.53} is $v=0$ and this clearly fails to match our boundary conditions. Hence $C_{\alpha}>K_2t^2$ and $v'$ has a fixed sign, so taking the square-root of \eqref{2.53} 
gives
\begin{equation}\label{2.54}
 v'=\frac{\left(C_{\alpha}-\frac{K_2^2t^2\cos^2v}{\left(K_2\cos^2v+K_3\sin^2v\right)}\right)^{\frac{1}{2}}}{\left(K_1\cos^2v +K_3\sin^2v\right)^\frac{1}{2}}:=k(z,v)
\quad v(0)=0.
\end{equation}
We can use the Picard-Lindel\"{o}f Theorem here to deduce existence and uniqueness of this ODE \cite[p.8]{hartman1964ordinary}. It is applicable since the function 
$k(z,v)$ is smooth in its arguments, hence Lipschitz on any bounded set. This means that we have a unique solution of this ODE, $v_{\alpha}$, given implicitly by
\begin{equation}\label{2.55}
  \int_0^{v_{\alpha}(z)} \frac{\left(K_1\cos^2u+K_3\sin^2u\right)^{\frac{1}{2}}}{\left(C_{\alpha}-\frac{K_2^2t^2\cos^2u}{\left(K_2\cos^2u+K_3\sin^2u\right)}\right)^{\frac{1}{2}}}\,du=z.
\end{equation}
We finalise the definition of $v_{\alpha}$ by applying the top boundary condition of $v_{\alpha}(\alpha)=\frac{\pi}{2}$. This is possible because the functional
\begin{equation}\label{2.56}
 \eta(C):=\int_0^{\frac{\pi}{2}} \frac{\left(K_1\cos^2u+K_3\sin^2u\right)^{\frac{1}{2}}}{\left(C-\frac{K_2^2t^2\cos^2u}{\left(K_2\cos^2u+K_3\sin^2u\right)}\right)^{\frac{1}{2}}}\,du,
\end{equation}
is a strictly monotone decreasing function satisfying 
\begin{equation}\label{2.57}
 \eta(K_2t^2)=\infty\quad \text{and}\quad \eta(C)\rightarrow0\quad \text{as}\quad C\rightarrow \infty.
\end{equation}
This means that there is a unique solution to the equation $\eta(C_{\alpha})=\alpha$. Therefore we have established that for all $\alpha>0$ we have found the unique smooth $v_{\alpha}$ satisfying
\begin{equation}\label{2.58}
 F_{\alpha}(v_{\alpha})=\inf_{v\in \mathcal{A}_{\alpha}} F_{\alpha}(v).
\end{equation}
Now we note that for $\beta>\alpha>0$
\begin{equation}\label{2.59}
 v(z):=\left\{ \begin{array}{lcl} 0&\text{if}&z\in(0,\beta-\alpha)\\&&\\v_{\alpha}(z-(\beta-\alpha))&\text{if}&z\in(\beta-\alpha,\beta)\end{array}\right. \in \mathcal{A}_{\beta},
\end{equation}
with the property that 
\begin{equation}\label{2.60}
 F_{\beta}(v)=F_{\alpha}(v_{\alpha})=\inf_{w\in \mathcal{A}_{\alpha}}F_{\alpha}(w).
\end{equation}
An immediate consequence of \eqref{2.60} is that 
\begin{equation}\label{2.61}
F_{\beta}(v_{\beta})= \inf_{u\in \mathcal{A}_{\beta}}F_{\beta}(u)\leqslant F_{\beta}(v)=\inf_{w\in \mathcal{A}_{\alpha}}F_{\alpha}(w).
\end{equation}
To conclude that this inequality is strict we need to show that $v\neq v_{\beta}$. However we know this to be true because $C_{\beta}>K_2t^2$ tells us that $v_{\beta}'(z)\neq 0$ for any $z\in [0,1]$ whereas $v'(z)=0$ for $z\in(0,\beta-\alpha)$ in \eqref{2.59}. Hence the inequality in \eqref{2.61} is strict.

\end{proof}

We can now prove the main result of this section by finding the one variable global minimisers.

\begin{theorem}\label{2:1varmin}

The function $\theta^*=\theta^*(z)$ given implicitly by the equation 
\begin{equation}\label{2.46}
 \int_0^{\theta^*} \frac{\left(K_1\cos^2u+K_3\sin^2u\right)^{\frac{1}{2}}}{\left(C-\frac{K_2^2t^2\cos^2u}{\left(K_2\cos^2u+K_3\sin^2u\right)}\right)^{\frac{1}{2}}}\,du=z \quad \theta^*\left(1 \right) = \frac{\pi}{2},
\end{equation}
with $\phi^*$ as defined by \eqref{2.44}, determines a director field ${\bf n}^*$ which is the global minimum of $I$ over admissible functions which depend only on $z$. Moreover the constant $C$ in \eqref{2.46} is determined by the unique solution to
\begin{equation} \label{2.47}
 \int_0^{\frac{\pi}{2}} \frac{\left(K_1\cos^2u+K_3\sin^2u\right)^{\frac{1}{2}}}{\left(C-\frac{K_2^2t^2\cos^2u}{\left(K_2\cos^2u+K_3\sin^2u\right)}\right)^{\frac{1}{2}}}\,du=1.
\end{equation}
\end{theorem}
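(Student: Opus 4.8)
The plan is to reduce the minimisation of $I$ over $z$-dependent directors to the scalar problem for $\theta$ already isolated in \eqref{2.45}, to read off its minimiser from Lemma \ref{2:lemma_poles} together with the construction carried out in its proof, and finally to exhibit the optimal director $\vn^*$ and check that it is admissible.

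First I would make the reduction \eqref{2.40}--\eqref{2.45} rigorous. Given an admissible $\vn$ depending on $z$ alone, Proposition \ref{ThetaRegularity} gives $\theta\in W^{1,2}(0,1)$, and $\vn|_{z=0}={\bf e}_1$, $\vn|_{z=1}={\bf e}_3$ force $\theta(0)=0$, $\theta(1)=\frac{\pi}{2}$, so $\theta\in\mathcal{A}_1$ in the notation of \eqref{2.48}. The Euler-angle identity \eqref{2.40} then holds almost everywhere: every $\phi$-term carries a factor $\cos^2\theta$, and each is integrable by \eqref{2.41} and Cauchy--Schwarz, so the poles cause no difficulty. Completing the square in $\phi'$ exactly as in \eqref{2.42}--\eqref{2.45} gives $I(\vn)\geqslant 4L_1L_2\,F_1(\theta)$, with equality here precisely when $\phi'=K_2t/(K_2\cos^2\theta+K_3\sin^2\theta)$ almost everywhere on $\{\cos\theta\neq 0\}$.

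Next I would identify the minimiser of $F_1$ over $\mathcal{A}_1$. The proof of Lemma \ref{2:lemma_poles}, specialised to $\alpha=1$, already produces the unique smooth minimiser $v_1$: this is exactly the function $\theta^*$ of \eqref{2.46}, with $C$ the unique root of \eqref{2.47}; since $C>K_2t^2$ the first integral \eqref{2.51} forces $\theta^{*\prime}>0$, so $\theta^*$ increases strictly from $0$ to $\frac{\pi}{2}$ and in particular $\theta^*<\frac{\pi}{2}$ on $[0,1)$. To see that $\theta^*$ is the only minimiser I would argue that no minimiser $\theta$ of $F_1$ over $\mathcal{A}_1$ can reach $\pm\frac{\pi}{2}$ before $z=1$: if $|\theta(z_0)|=\frac{\pi}{2}$ with $z_0<1$, then — using that $f,g$ are even and that the integrand $f(\theta)\theta'^2-g(\theta)+K_2t^2$ is nonnegative — restriction to $[0,z_0]$ together with Lemma \ref{2:lemma_poles} gives $F_1(\theta)\geqslant\inf_{\mathcal{A}_{z_0}}F_{z_0}>\inf_{\mathcal{A}_1}F_1$, a contradiction. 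Hence every minimiser stays in $(-\frac{\pi}{2},\frac{\pi}{2})$ on $(0,1)$, so is smooth there and satisfies \eqref{2.50}--\eqref{2.51}, whence it coincides with $v_1=\theta^*$ by the Picard--Lindel\"of uniqueness argument of Lemma \ref{2:lemma_poles}. (A competitor valued in $(-\frac{\pi}{2},\frac{\pi}{2})$ may a priori be negative somewhere; replacing it by its absolute value changes neither $F_1$ nor the boundary data.)

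Finally I would assemble $\vn^*$. Put $\phi^*(z):=\int_0^z K_2t/(K_2\cos^2\theta^*+K_3\sin^2\theta^*)\,du$, which is $C^1$ since the denominator is bounded below by $\min\{K_2,K_3\}>0$, and set $\vn^*:=(\cos\theta^*\cos\phi^*,\cos\theta^*\sin\phi^*,\sin\theta^*)$; because $\theta^*<\frac{\pi}{2}$ on $[0,1)$ this uses $\phi^*$ only where it is defined, and $\vn^*$ is $C^1$ on $[0,1]$ with $\vn^*|_{z=0}={\bf e}_1$ and $\vn^*|_{z=1}={\bf e}_3$, so, extended constantly in $x$ and $y$, $\vn^*\in\mathcal{A}$. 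For $\vn^*$ the squared $\phi'$-term in \eqref{2.40} vanishes, so $I(\vn^*)=4L_1L_2\,F_1(\theta^*)=4L_1L_2\inf_{\mathcal{A}_1}F_1\leqslant I(\vn)$ for every $z$-dependent $\vn\in\mathcal{A}$; and if $I(\vn)=I(\vn^*)$ then $\theta=\theta^*$ and $\phi'=\phi^{*\prime}$ with $\phi(0)=0$, so $\vn=\vn^*$. The step I expect to be the main obstacle is the reduction itself — legitimising the Euler-angle change of variables for a merely $W^{1,2}$ director, controlling its terms near the poles, and confirming the candidate $\vn^*$ lies in $W^{1,2}(\Omega,\mathbb{S}^2)$ — together with deploying the strict monotonicity of Lemma \ref{2:lemma_poles} correctly so as to exclude interior contact of the minimiser with $\pm\frac{\pi}{2}$, which is precisely what makes the $\phi$-construction, and hence the whole reduction, consistent.
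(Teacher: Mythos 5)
Your proposal is correct and follows essentially the same route as the paper: reduce to the scalar functional $F_1$ via the Euler-angle substitution and completing the square in $\phi'$, invoke Lemma \ref{2:lemma_poles} both to rule out interior contact with $\pm\frac{\pi}{2}$ and to identify the unique minimiser $v_1=\theta^*$, then reassemble $\vn^*$ with $\phi^*$ from \eqref{2.44}. You supply somewhat more detail than the paper does (integrability of the $\phi$-terms near the poles, the sign of $\theta$, admissibility of $\vn^*$), but the argument is the same.
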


\begin{proof}

We take an arbitrary $\vn\in\mathcal{A}$ which depends only on $z$. We know that 
\begin{equation}\label{2.62}
\left. \begin{array}{ccl} I({\bf n})&\geqslant& 4L_1L_2\int_0^1\left(K_1\cos^2\theta +K_3\sin^2\theta\right)\theta'^2-\frac{K_2^2t^2\cos^2\theta}{\left(K_2\cos^2\theta+K_3\sin^2\theta\right)}+K_2t^2\,dz\\&&\\&=&4L_1L_2\int_0^1 f(\theta)\theta'^2-g(\theta)+K_2t^2\,dz.\end{array}\right. 
\end{equation}
We also know that $\theta\in W^{1,2}(0,1)\subset\subset C[0,1]$, so $\theta$ is a continuous function. Lemma \ref{2:lemma_poles} tells us that if $\theta(\alpha)=\pm \frac{\pi}{2}$ for some $0<\alpha<1$ then we cannot have the minimum energy of \eqref{2.62}(ii). In short
\begin{equation}\label{2.63}
 \int_0^1 f(\theta)\theta'^2-g(\theta)+K_2t^2\,dz\geqslant \int_0^1 f(\theta^*){\theta^*}'^2-g(\theta^*)+K_2t^2\,dz.
\end{equation}
where $\theta^*=v_1$ in the notation of Lemma \ref{2:lemma_poles}. Additionally the uniqueness of the minimiser from Lemma \ref{2:lemma_poles} for each positive $\alpha$ means that we have equality in \eqref{2.63} if and only if $\theta=\theta^*$. For completeness we should note that given a smooth $\theta^*$, the equation involving $\phi$, \eqref{2.44}, defines a unique smooth function $\phi^*(z)$ by Picard-Lindel\"{o}f \cite[p.8]{hartman1964ordinary}. So now we can say that 
\begin{equation}\label{2.64}
 I(\vn)\geqslant 4L_1L_2\int_0^1 f(\theta^*){\theta^*}'^2-g(\theta^*)+K_2t^2\,dz=I(\vn^*),
\end{equation}
with equality if and only if $\theta=\theta^*$ and also 
\begin{equation}\label{2.65}
\phi'=\frac{K_2t}{\left(K_2\cos^2\theta+K_3\sin^2\theta\right)}\quad\text{and}\quad \phi(0)=0 .
\end{equation}
Hence $I(\vn)\geqslant I(\vn^*)$ with equality if and only if $\theta=\theta^*$ and $\phi=\phi^*$, or equivalently $\vn=\vn^*$. 

\end{proof}

\begin{remark}

Theorem \ref{2:1varmin} concludes the discussion on the minimisers of functions of $z$ alone. However not having a closed form for these minimisers is a significant problem since we want to find explicit minimisers in order to compare with experimental observations. In order to achieve this we need to assign values to the elastic constants, we will use the one constant approximation because it makes the analysis most tractable.

\end{remark}

%
%
%
% Long cholesteric pitch, global min result
%
%
%

\section{Long cholesteric pitch}\label{section: long pitch}

When we apply the one constant approximation to the general Oseen-Frank free energy, the cholesteric free energy reduces to
\begin{equation}\label{2.91}
I({\bf n})=K\int_{\Omega} |\nabla {\bf n}|^2+2t{\bf n}\cdot\nabla\times{\bf n}+t^2\,dx,
\end{equation}
with the same set of admissible functions 
\begin{equation}\label{2.92}
 \mathcal{A}:=\left\{\,\left.\vn\in W^{1,2}(\Omega,\mathbb{S}^2)\,\right|\,\vn|_{z=0}= {\bf e}_1,\,\,\vn|_{z=1}= {\bf e}_3,\,\,\vn|_{x=-L_1}=\vn|_{x=L_1} ,\,\,\vn|_{y=-L_2}= \vn|_{y=L_2}
\,\right\}.
\end{equation}
For ease we will drop the now unimportant elastic constant $K$ from our functional. We already know the form of the minimiser amongst functions of one variable of this functional from Theorem \ref{2:1varmin}. Hence we begin this section with a simple corollary which is just a specific case of Theorem \ref{2:1varmin}.

\begin{corollary}\label{2: cholesteric one variable min}

The function, $\theta(z)$, given implicitly by the equation 
\begin{equation}\label{2.93}
 \int_0^{\theta(z)} \frac{1}{\left(D-t^2\cos^2u\right)^{\frac{1}{2}}}\,du=z,
\end{equation}
where the constant $D$ is defined by
\begin{equation} \label{2.94}
 \int_0^{\frac{\pi}{2}} \frac{1}{\left(D-t^2\cos^2u\right)^{\frac{1}{2}}}\,du=1,
\end{equation}
and
\begin{equation}\label{2.95}
 \phi=tz,
\end{equation}
determines a director field ${\bf n}^*$ which is the unique global minimum of \eqref{2.91} over $\vn \in \mathcal{A}$ which depend only on $z$.
\end{corollary}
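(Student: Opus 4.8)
The plan is to deduce Corollary \ref{2: cholesteric one variable min} directly from Theorem \ref{2:1varmin} by specialising the elastic constants to the one-constant approximation $K_1 = K_2 = K_3 = K$, $K_4 = 0$, and then carefully tracking which simplifications occur. First I would substitute $K_1 = K_2 = K_3 = K$ into the quantities $f$ and $g$ of \eqref{2.45}. Since $f(\theta) = K_1\cos^2\theta + K_3\sin^2\theta$ collapses to the constant $K$, and $g(\theta) = K_2^2 t^2 \cos^2\theta / (K_2\cos^2\theta + K_3\sin^2\theta)$ collapses to $K t^2 \cos^2\theta$, the reduced functional \eqref{2.45} becomes $4L_1L_2 K \int_0^1 \theta'^2 - t^2\cos^2\theta + t^2\,dz$ (modulo the overall factor $K$ which, as noted in the text, we drop). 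Likewise the integrand in the implicit relation \eqref{2.46} simplifies: the numerator $(K_1\cos^2 u + K_3\sin^2 u)^{1/2}$ becomes $K^{1/2}$, and the denominator becomes $(C - Kt^2\cos^2 u)^{1/2}$, so after cancelling the common factor $K^{1/2}$ and setting $D := C/K$ we obtain exactly \eqref{2.93} and \eqref{2.94}.

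Second, I would verify the claim $\phi = tz$ by plugging $K_2 = K_3 = K$ into the prescription \eqref{2.44} for $\phi'$: the right-hand side $K_2 t / (K_2\cos^2\theta + K_3\sin^2\theta)$ reduces to $Kt/K = t$, which with $\phi(0) = 0$ integrates to $\phi^* = tz$, as stated in \eqref{2.95}. I should also check that the Euler–Frank functional \eqref{2.40} in the one-constant case does indeed reduce to the restriction of \eqref{2.91} to functions of $z$ alone; this is an elementary computation of $|\nabla\vn|^2$ and $\vn\cdot\nabla\times\vn$ in Euler-angle coordinates (for a $z$-dependent director these are $\theta'^2 + \cos^2\theta\,\phi'^2$ and $-\cos^2\theta\,\phi'$ respectively), and matches the specialisation of \eqref{2.40}.

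Third, having checked that all three displayed formulas \eqref{2.93}–\eqref{2.95} are precisely what Theorem \ref{2:1varmin} produces under \eqref{0.5}, the conclusion — that the resulting $\vn^*$ is the unique global minimiser of \eqref{2.91} among admissible maps depending only on $z$ — is immediate from the corresponding conclusion of Theorem \ref{2:1varmin}, together with the observation that dropping the constant $K$ and the additive constant $t^2$ does not affect the minimisation. In particular the existence and uniqueness of $D$ solving \eqref{2.94} follows from the monotonicity of $\eta$ established in \eqref{2.56}–\eqref{2.57} specialised to $K_1 = K_2 = K_3 = K$.

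I do not expect any real obstacle here: the corollary is a routine specialisation, and the only points requiring a line of care are (i) confirming the algebraic collapse of $f$, $g$ and the integrand, and (ii) confirming $\phi' \equiv t$. The mildest subtlety is bookkeeping the constant: $C$ in Theorem \ref{2:1varmin} and $D$ in the corollary differ by the factor $K$, but since $\phi^*$ and $\theta^*$ are what matter this is cosmetic.
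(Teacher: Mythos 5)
Your proposal is correct and follows exactly the paper's own route: the paper proves the corollary by substituting $K_1=K_2=K_3=K$ into \eqref{2.46}--\eqref{2.47} and setting $D=C/K$, which is precisely your specialisation argument (your version simply spells out the collapse of $f$, $g$ and $\phi'$ that the paper leaves as "immediate"). No gaps.
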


\begin{proof}

By substituting $K_1=K_2=K_3=K$ into \eqref{2.46} and \eqref{2.47}, and by setting 
\begin{equation}\label{2.96}
 D=\frac{C}{K},
\end{equation}
we immediately obtain the result

\end{proof}

Before we proceed further, to better understand our one variable minimiser it is constructive to compute the explicit solutions of \eqref{2.93} and \eqref{2.94} for various values of $t$. For all of the numerical calculations the standard 
ODE solver in Matlab was used, a fourth order Runge-Kutta method. The following four figures: Figures \ref{fig:s=5}, \ref{fig:s=10}, \ref{fig:s=20}, and \ref{fig:s=40}, each show a plot of 
$\theta$ against the height of the cell, each for a different value of the cholesteric twist strength, $t$.

\vspace{3 mm}

\begin{figure}[ht!]
\centering
\subfloat[$t=2.5$]{\label{fig:s=5}\includegraphics[scale=0.21]{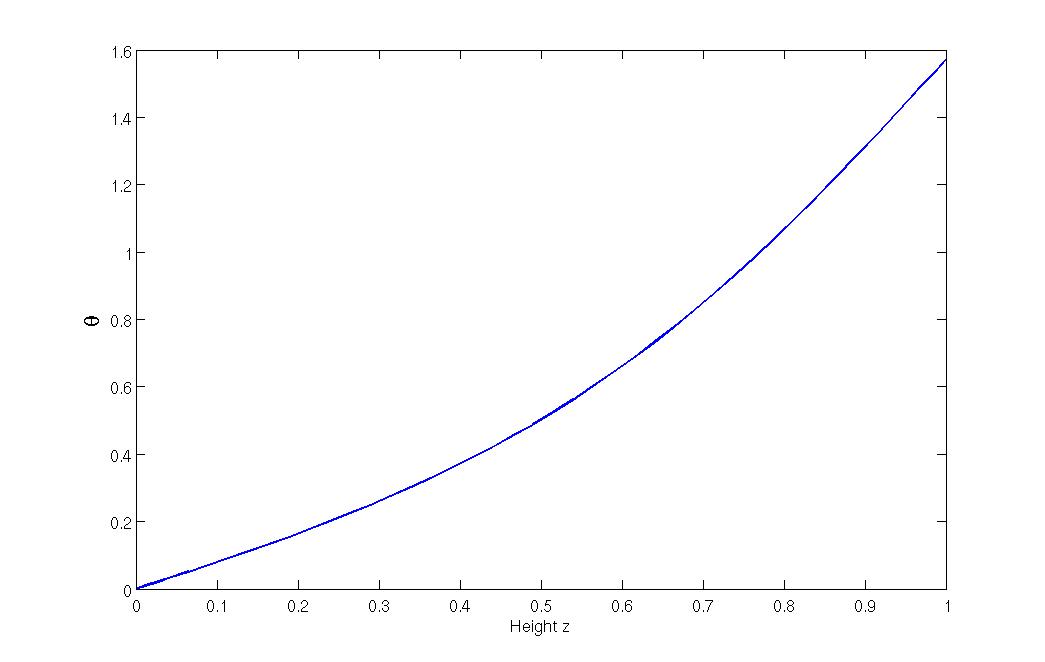}}
\subfloat[$t=5$]{\label{fig:s=10}\includegraphics[scale=0.21]{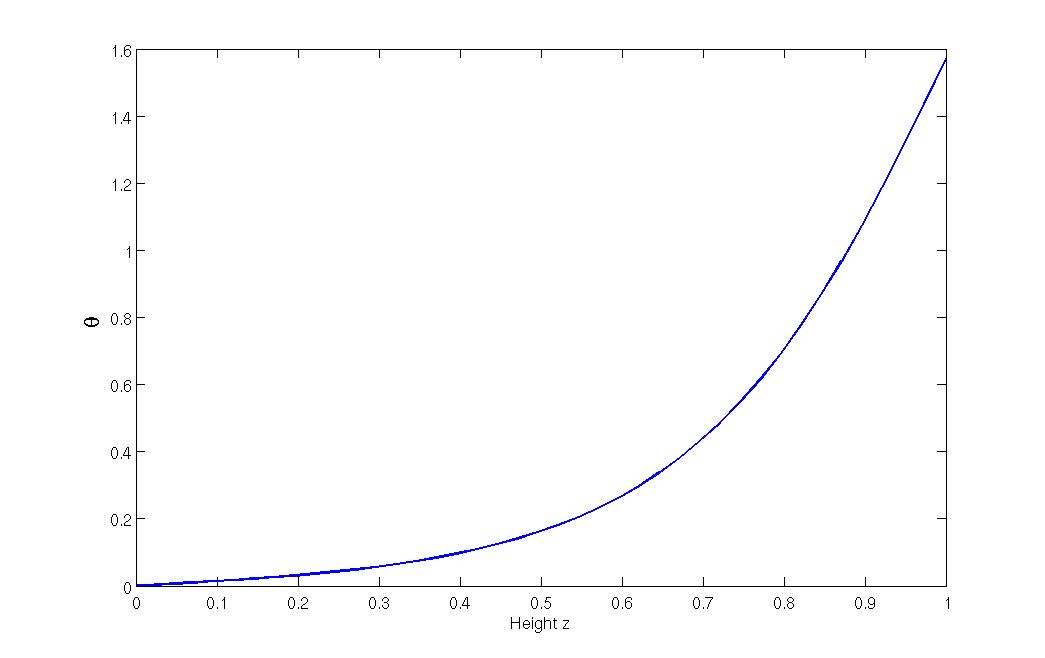}}\\
\subfloat[$t=10$]{\label{fig:s=20}\includegraphics[scale=0.21]{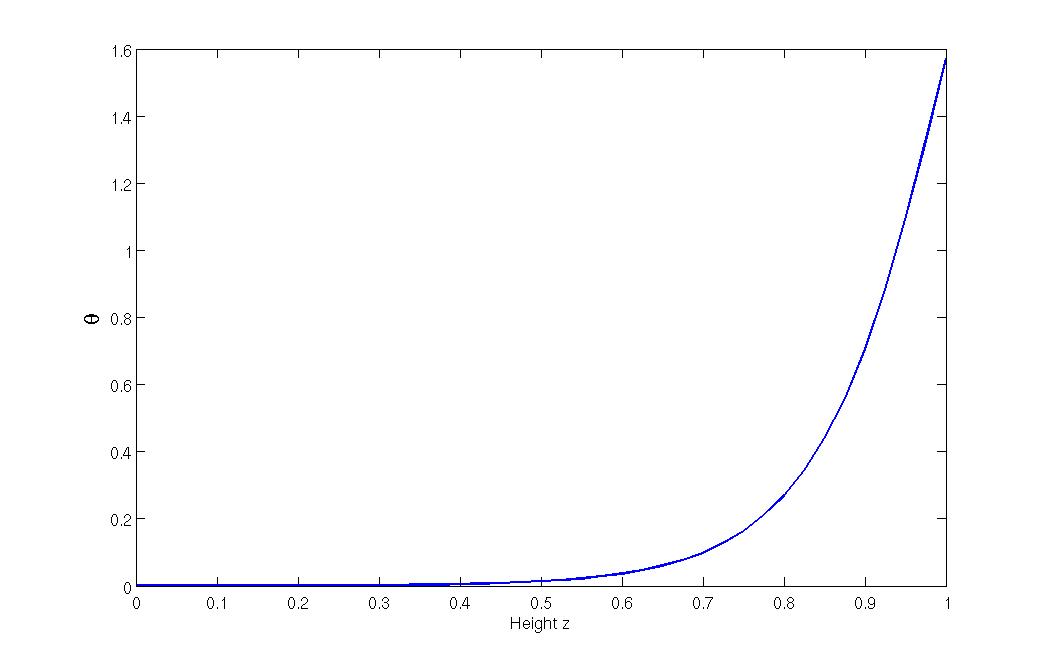}}
\subfloat[$t=20$]{\label{fig:s=40}\includegraphics[scale=0.21]{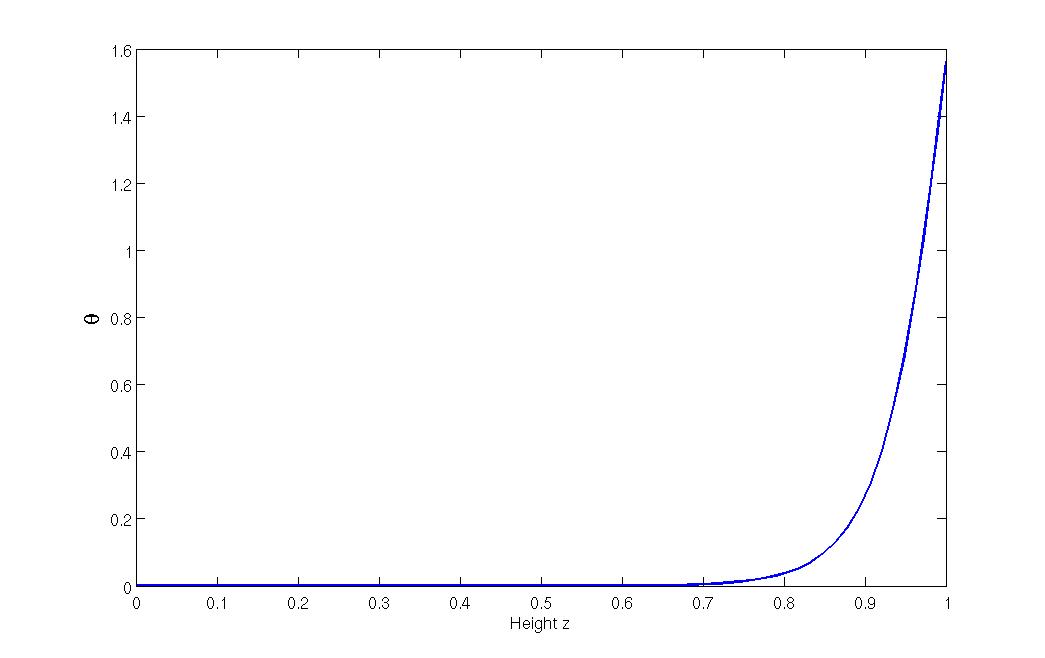}}
\caption{$\theta$ plots for various values of $t$}
\label{2:fig_thetas}
\end{figure}

The trend is clear to see. Small values of $t$ correspond to a slight bending of the 
case $t=0\,\,(\Rightarrow \theta = \frac{\pi z}{2})$. Larger values of $t$ have much more curvature towards the top of the cell. The reason for this can be seen by looking at \eqref{2.91} 
and substituting $\phi'=t$. When we do so, we find that the energy, purely in terms of $\theta$, is given by 
\begin{equation}\label{2.96b}
 \int_0^1 \theta'^2+t^2\sin^2\theta\,dz.
\end{equation}
Hence it is no surprise that as $t$ increases the second term becomes more dominant, meaning that $\theta$ remains small so as to minimise $\sin^2 \theta$.

\vspace{3 mm}

We return now to the problem set out in \eqref{2.91} and \eqref{2.92}. We have found the global minimiser of this problem over functions of $z$ alone. The very natural question from that point is to ask whether these functions, obtained 
numerically in Figure \ref{2:fig_thetas}, are the global minimisers of the entire problem. The question has been partly answered using Theorem \ref{2:theorem_global_min} below. 

\vspace{3 mm}

Firstly and most importantly, in order to check whether our candidate minimiser is a global minimiser we need to check that it satisfies the Euler-Lagrange equation. Hence we need to derive the equation, which we do in the following lemma.

\begin{lemma}\label{2:lemma_chol_euler_lagrange}

If we have ${\bf n}\in\mathcal{A}\cap W^{1,\infty}\left(\Omega,\mathbb{S}^2 \right)$ which is a weak local minimiser of \eqref{2.91}. Then it satisfies the following Euler-Lagrange equation (in the sense of distributions)
\begin{equation}\label{2.100}
 \Delta {\bf n}-2t\nabla\times{\bf n}+\left(|\nabla{\bf n}|^2+2t{\bf n}\cdot\nabla\times{\bf n}\right){\bf n}=0,
\end{equation}
together with the natural boundary conditions
\begin{equation}\label{2.100.1}
 \left(\nabla \vn\right) \bm\nu |_{x=-L_1} = \left(\nabla \vn\right) \bm \nu |_{x=L_1},\,\, \left(\nabla \vn\right) \bm\nu |_{y=-L_2} = \left(\nabla \vn\right) \bm \nu |_{y=L_2}.
\end{equation}

\end{lemma}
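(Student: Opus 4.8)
The plan is to invoke Theorem~\ref{thm: weak local min}, which tells us that for a weak local minimiser $\vn\in\mathcal A\cap W^{1,\infty}$ the first variation of $I$ along every normalised path $\epsilon\mapsto(\vn+\epsilon\vv)/|\vn+\epsilon\vv|$, $\vv\in{\rm Var}_\mathcal A$, vanishes, and then to unpack that identity. Since $\vn\in W^{1,\infty}$ we have $|\vn+\epsilon\vv|\geqslant 1-\epsilon\|\vv\|_\infty>0$ for small $\epsilon$, so differentiation under the integral sign is legitimate and the variation of the director at $\epsilon=0$ is the tangential field $\vw:=\vv-(\vv\cdot\vn)\vn$. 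Computing
\[
 \left.\frac{d}{d\epsilon}I\left(\frac{\vn+\epsilon\vv}{|\vn+\epsilon\vv|}\right)\right|_{\epsilon=0}=0
\]
then gives
\[
 2\int_\Omega\nabla\vn:\nabla\vw\,dx+2t\int_\Omega\bigl(\vw\cdot\nabla\times\vn+\vn\cdot\nabla\times\vw\bigr)\,dx=0\qquad\text{for all }\vv\in{\rm Var}_\mathcal A.
\]
The two facts that drive everything are the pointwise identity $\vn\cdot\partial_j\vn=\tfrac12\partial_j|\vn|^2=0$ (valid a.e.\ because $|\vn|\equiv1$) and the curl integration-by-parts formula $\int_\Omega\vn\cdot\nabla\times\vw=\int_\Omega\vw\cdot\nabla\times\vn+\int_{\partial\Omega}(\vw\times\vn)\cdot\bm\nu$, obtained from $\nabla\cdot(\vw\times\vn)$.

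First I would take $\vv=\bm\varphi\in C_c^\infty(\Omega,\mathbb R^3)\subset{\rm Var}_\mathcal A$, killing all boundary terms. Expanding $\vw=\bm\varphi-(\bm\varphi\cdot\vn)\vn$ and using $\vn\cdot\partial_j\vn=0$ repeatedly, the term $2\int\nabla\vn:\nabla[(\bm\varphi\cdot\vn)\vn]$ collapses to $2\int(\bm\varphi\cdot\vn)|\nabla\vn|^2$, the two curl terms each reduce to $2t\int\bm\varphi\cdot[\nabla\times\vn-(\vn\cdot\nabla\times\vn)\vn]$, and $2\int\nabla\vn:\nabla\bm\varphi=-\langle2\Delta\vn,\bm\varphi\rangle$ in $\mathcal D'(\Omega)$. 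Collecting terms yields
\[
 \bigl\langle\,-2\Delta\vn+4t\nabla\times\vn-\bigl(2|\nabla\vn|^2+4t\,\vn\cdot\nabla\times\vn\bigr)\vn,\ \bm\varphi\,\bigr\rangle=0
\]
for all test fields, which is \eqref{2.100} after dividing by $-2$; the multiplier $|\nabla\vn|^2+2t\,\vn\cdot\nabla\times\vn$ is precisely what the constraint $|\vn|=1$ forces, being equal to $-\Delta\vn\cdot\vn+2t(\nabla\times\vn)\cdot\vn$. As a by-product, $\Delta\vn=2t\nabla\times\vn-(|\nabla\vn|^2+2t\,\vn\cdot\nabla\times\vn)\vn\in L^\infty(\Omega)$, so $\vn\in H^2_{\mathrm{loc}}(\Omega)$.

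For the natural boundary conditions \eqref{2.100.1} I would return to the weak identity with $\vv\in{\rm Var}_\mathcal A$ supported near the paired faces $\{x=\pm L_1\}$ and away from $z=0,1$ (and symmetrically near $\{y=\pm L_2\}$). To run Green's formula up to these flat faces, where a priori only $\vn\in W^{1,\infty}$, I would unfold in $x$: the periodicity built into $\mathcal A$ lets $\vn$ be extended across $x=\pm L_1$ to a solution of \eqref{2.100} on a wider slab, so interior $H^2$ regularity gives $\nabla\vn$ an $L^2$ trace on those faces. Integrating by parts and using that the bulk equation \eqref{2.100} is already known, every interior integral cancels, leaving $2\int_{\partial\Omega}(\nabla\vn\,\bm\nu)\cdot\vw+2t\int_{\partial\Omega}(\vw\times\vn)\cdot\bm\nu=0$. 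On each pair of opposite faces the outward normals are opposite while $\vn$ and $\vw$ are periodic, so the chiral surface term cancels pair-by-pair, and one is left with $\int_{\{x=L_1\}}\bigl[(\nabla\vn){\bf e}_1|_{x=L_1}-(\nabla\vn){\bf e}_1|_{x=-L_1}\bigr]\cdot\vw\,dy\,dz=0$. Since $(\nabla\vn){\bf e}_1=\partial_x\vn$ is automatically orthogonal to $\vn$, and the traces of $\vw$ fill a dense subset of the tangent-to-$\vn$ fields on the face as $\vv$ varies, this forces $(\nabla\vn){\bf e}_1|_{x=L_1}=(\nabla\vn){\bf e}_1|_{x=-L_1}$, and likewise in $y$ — that is, \eqref{2.100.1}.

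The step I expect to be the main obstacle is this boundary analysis: one has to keep track that the admissible variation is the tangential projection $\vw$, not $\vv$; that the chiral term genuinely produces a surface integral (there being no natural condition to absorb it) which survives only to be annihilated by periodicity; and, most delicately, that $\nabla\vn$ is regular enough at the lateral faces for its trace even to make sense — which is exactly what the periodic-unfolding together with the interior $H^2$ estimate supplies. By contrast the bulk computation is routine once one works consistently with $\vw$ and exploits $\vn\cdot\partial_j\vn=0$.
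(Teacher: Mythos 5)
Your proposal is correct and follows essentially the same route as the paper: vanishing of the first variation via Theorem \ref{thm: weak local min}, expansion of the tangential variation to produce the multiplier term $\left(|\nabla\vn|^2+2t\,\vn\cdot\nabla\times\vn\right)\vn$, compactly supported test functions for the bulk equation \eqref{2.100}, and periodic test functions together with cancellation of the chiral surface term for \eqref{2.100.1}. The only substantive difference is that you justify the existence of a trace of $\nabla\vn$ on the lateral faces by periodic unfolding and interior $H^2$ regularity, a point the paper's shorter proof passes over silently.
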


\begin{proof}

We take some $\vphi \in \rm{Var}_\mathcal{A}$ and note that 
\begin{equation}\label{2.100.2}
 \frac{\vn+\epsilon \vphi}{|\vn+\epsilon \vphi|} \rightarrow \vn \,\, \text{in}\,\, W^{1,\infty}\,\, \text{as}\,\, \epsilon \rightarrow 0.
\end{equation}
Therefore, because $\vn$ is a weak local minimiser, it must be the case that the first variation vanishes
\begin{equation}\label{2.100.3}
 \begin{split}
  0 & = \left. \frac{d}{d\epsilon} I \left( \frac{\vn+\epsilon\vphi}{|\vn+\epsilon \vphi|} \right) \right|_{\epsilon=0} \\
   & = 2 \int_\Omega \nabla \vphi : \nabla \vn - (\vn \cdot \vphi)\left( |\nabla \vn|^2 + 2t \vn \cdot \nabla \times \vn \right) +t\left( \vphi \cdot \nabla \times \vn + \vn \cdot \nabla \times \vphi\right)\, dx.
 \end{split}
\end{equation}
By using integration by parts, together with the boundary conditions for both $\phi$ and $\vn$, we can deduce that $\int_\Omega \vphi \cdot \nabla \times \vn \,dx = \int_\Omega \vn \cdot \nabla \times \vphi\,dx$. Hence
\begin{equation}\label{2.100.4}
 0 = \int_\Omega -\vphi \cdot \left( \Delta \vn -2t \nabla \times \vn +\vn \left( |\nabla \vn|^2 +2t\vn\cdot \nabla \times \vn\right) \right)\, dx + \int_{\partial \Omega} \vphi \cdot \left[ (\nabla \vn)\bm \nu \right] \, dS,
\end{equation}
for any $\vphi \in \rm{Var}_\mathcal{A}$. This proves equations \eqref{2.100} and \eqref{2.100.1} and completes the proof.

\end{proof}

\begin{theorem}\label{2:theorem_global_min}

There exists some $t^*>0$ such that whenever $t\leqslant t^*$, the function defined by \eqref{2.93}, \eqref{2.94}, and \eqref{2.95} is the 
unique global minimiser of 
\begin{equation}\label{2.101}
 I({\bf n})=\int_{\Omega} |\nabla {\bf n}|^2+2t{\bf n}\cdot\nabla\times{\bf n}+t^2\,dx,
\end{equation}
over \emph{all} admissible functions ${\bf n}\in\mathcal{A}$.

\end{theorem}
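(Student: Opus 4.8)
The plan is to show that for small $t$ the one-variable minimiser $\vn^*$ beats every competitor by a perturbative argument around the nematic ($t=0$) problem. First I would record the structure of the energy. Writing $I(\vn)=\int_\Omega|\nabla\vn|^2\,dx + 2t\int_\Omega\vn\cdot\nabla\times\vn\,dx + t^2|\Omega|$, the last term is constant on $\mathcal A$, the first term $D(\vn):=\int_\Omega|\nabla\vn|^2$ is the Dirichlet energy, and the cross term is linear in $t$ but (crucially) controlled by the Dirichlet energy: by Cauchy--Schwarz, $\bigl|\int_\Omega\vn\cdot\nabla\times\vn\,dx\bigr|\le \sqrt{2}\,D(\vn)$ pointwise since $|\nabla\times\vn|\le\sqrt2|\nabla\vn|$ and $|\vn|=1$. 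Hence $I(\vn)\ge (1-\sqrt2\,t)D(\vn) + t^2|\Omega|$, so for $t<1/\sqrt2$ the functional is coercive on $\mathcal A$, and direct-methods (weak lower semicontinuity of $D$, weak closure of the sphere constraint under $W^{1,2}$ convergence, stability of the trace and periodicity conditions) give existence of a global minimiser $\vn_t\in\mathcal A$ for each such $t$.

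The core of the argument is a uniform bound and a compactness step. Since $\vn^*\in\mathcal A$ is a fixed admissible competitor with $I(\vn^*)\to D_0:=D(\vn^*_{0})$-type behaviour as $t\to0$ — more precisely $I(\vn^*)$ is bounded independently of $t$ for $t$ in a neighbourhood of $0$ — comparison $I(\vn_t)\le I(\vn^*)$ together with the coercivity estimate gives $D(\vn_t)\le M$ for all small $t$, with $M$ independent of $t$. Now suppose for contradiction that there is a sequence $t_k\to 0$ with $\vn_{t_k}\ne\vn^*$ (as the global minimiser for $t_k$), i.e. the theorem fails along $t_k$. By the uniform bound, after passing to a subsequence, $\vn_{t_k}\rightharpoonup \vn_\infty$ in $W^{1,2}(\Omega,\mathbb R^3)$ and strongly in $L^2$, so $|\vn_\infty|=1$ a.e. and $\vn_\infty\in\mathcal A$. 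The cross term $2t_k\int\vn_{t_k}\cdot\nabla\times\vn_{t_k}$ tends to $0$ and $t_k^2|\Omega|\to0$, so by lower semicontinuity $D(\vn_\infty)\le\liminf D(\vn_{t_k})\le\liminf I(\vn_{t_k})-t_k^2|\Omega| - (\text{cross term}) \le \lim I(\vn^*)= D(\vn^*)$ evaluated in the limit, using $I(\vn^*)\to D(\vn^*)$ as the cross and constant terms vanish. But it is classical (and here one would cite or quickly verify via the one-variable reduction of Corollary \ref{2: cholesteric one variable min} at $t=0$, giving $\theta=\tfrac{\pi z}{2}$, $\phi=0$) that the nematic problem $\min_{\mathcal A}D$ has the unique minimiser $\vn^*_0$, and that $\vn^*\to\vn^*_0$ in $W^{1,2}$ as $t\to0$; hence $\vn_\infty=\vn^*_0$ and $D(\vn_{t_k})\to D(\vn^*_0)$, which upgrades weak to strong convergence $\vn_{t_k}\to\vn^*_0$ in $W^{1,2}$.

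The remaining step is a quantitative local minimality of $\vn^*$ that rules out $\vn_{t_k}$ being a \emph{different} global minimiser once $\vn_{t_k}$ is $W^{1,2}$-close to $\vn^*_0$. For this I would use the second variation: the nematic Dirichlet energy at $\vn^*_0$ is strictly positive-definite in the directions transverse to $\vn^*_0$ (the linearised operator is $-\Delta-|\nabla\vn^*_0|^2$ with the given boundary conditions, and one checks a spectral gap $\delta_0>0$ — this is where the specific frustrated geometry and the explicit profile enter). Because $\vn^*$ and its second variation depend continuously on $t$, there is $t^*>0$ and $\delta>0$ so that for $t\le t^*$ the second variation of $I$ at $\vn^*$ satisfies the coercivity hypothesis of Theorem \ref{thm: strong local min}, hence $\vn^*$ is a strict strong local minimiser with a uniform basin; then for $k$ large $\vn_{t_k}$ lies in this basin and $I(\vn_{t_k})\ge I(\vn^*)$ with equality only if $\vn_{t_k}=\vn^*$, contradicting $\vn_{t_k}\ne\vn^*$. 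Therefore for $t\le t^*$ the global minimiser is exactly $\vn^*$, and uniqueness follows from the strict inequality. The main obstacle I anticipate is the spectral-gap / coercivity estimate for the second variation — establishing a $t$-uniform lower bound $\delta\|\vv-(\vv\cdot\vn^*)\vn^*\|_{1,2}^2$ requires genuine information about the profile $\theta^*$ and the frustrated boundary conditions, not just soft compactness, and controlling it uniformly as $t\to0$ via perturbation from the (hopefully nondegenerate) nematic operator is the delicate point; a secondary technical nuisance is making the comparison between "the minimiser $\vn_t$" and "the candidate $\vn^*$" rigorous given that $\vn^*\notin W^{1,\infty}$ a priori near $z=1$ where $\theta^*=\pi/2$, which may force a small regularisation or a separate argument that the pole is harmless.
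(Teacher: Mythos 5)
Your route --- existence of a global minimiser $\vn_t$ by the direct method, soft compactness of $\vn_{t_k}$ as $t_k\to 0$, and then absorption into a local basin of the candidate --- is genuinely different from the paper's, which is a direct two-step computation: first the exact algebraic identity $I(\vn)-I(\vn^*)=H(\vn-\vn^*)$ for every competitor $\vn\in\mathcal{A}$, obtained by testing the Euler--Lagrange equation of $\vn^*$ against $\vn$ and integrating by parts, and second the explicit coercivity bound $H(\vv)\geqslant\gamma_t\int_\Omega|\vv|^2\,dx$ with $\gamma_t>0$ for $t\leqslant 0.766$, using $|\nabla\times\vv|\leqslant\sqrt{2}|\nabla\vv|$, Cauchy's inequality and the sharp Poincar\'e constant $\pi^2$. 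That argument needs no existence theory, no compactness and no second-variation analysis, and it gives an explicit $t^*$ and uniqueness in one stroke.

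As it stands your proposal has genuine gaps. First, the absorption step does not close: Theorem \ref{thm: strong local min} furnishes an $L^\infty$-neighbourhood on which $\vn^*$ minimises, but your compactness argument only yields $\vn_{t_k}\to\vn^*_0$ in $W^{1,2}$, and $W^{1,2}(\Omega)\not\hookrightarrow L^\infty$ for $\Omega\subset\mathbb{R}^3$; to put $\vn_{t_k}$ inside the basin you would need uniform convergence, hence an epsilon-regularity/Schoen--Uhlenbeck-type theory for minimisers of this harmonic-map-type functional, none of which is set up. Second, the $t$-uniform spectral gap for the second variation at $\vn^*_t$ --- which you correctly identify as the crux --- is not proved, so the entire third step is conditional; and even granted it, the cited theorem concludes $I(\vm)\geqslant I(\vn^*)$ without strictness, so a distinct minimiser of equal energy inside the basin is not excluded without a quantitative remainder. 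Third, the uniqueness of the nematic minimiser $\left(\cos(\pi z/2),0,\sin(\pi z/2)\right)$ over \emph{all} of $\mathcal{A}$ cannot be ``quickly verified via the one-variable reduction'': Corollary \ref{2: cholesteric one variable min} only treats competitors depending on $z$ alone, whereas your limit argument needs uniqueness against all three-dimensional competitors --- which is precisely the $t=0$ instance of the theorem you are proving. (A smaller slip: Cauchy--Schwarz gives $\bigl|\int_\Omega\vn\cdot\nabla\times\vn\,dx\bigr|\leqslant\sqrt{2}\,|\Omega|^{1/2}D(\vn)^{1/2}$, not $\sqrt{2}\,D(\vn)$; coercivity survives, but via Young's inequality. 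Also, the worry about $\vn^*\notin W^{1,\infty}$ near $z=1$ is unfounded, since the first integral $\theta'^2=\delta_t+t^2\sin^2\theta$ shows $\theta^*$ is smooth with nonvanishing derivative.) All three substantive issues are bypassed by the paper's identity-plus-Poincar\'e argument.
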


\begin{proof}

We have our candidate minimiser of this functional ${\bf n}^*$ which is given by the equation 
\begin{equation} \label{2.102}
 {\bf n}^*=\left( \begin{array}{c} \cos \theta  \cos(tz) \\ \cos\theta\sin(tz) \\ \sin\theta \end{array} \right) ,
\end{equation}
where $\theta$ is a solution of
\begin{equation}\label{2.103}
 \theta''=t^2\cos\theta\sin\theta,\quad\theta(0)=0,\quad\theta(1)=\frac{\pi}{2}.
\end{equation}
However we can find the first integral of \eqref{2.103} by multiplying by $\theta'$ and integrating to deduce that (for $\delta_{t}>0$)
\begin{equation}\label{2.104}
 \theta '^2 +t^2\cos^2\,\theta \,=\delta_{t} +t^2,\quad \theta(0)=0,\quad \theta(1)=\frac{\pi}{2} .
\end{equation}
The proof that this is indeed the global minimum has two main calculations.

\begin{itemize}
 \item Firstly we will show the following relation for an arbitrary ${\bf n}\in\mathcal{A}$
\begin{equation} \label{2.105}
H({\bf n}-{\bf n}^*) =I({\bf n})-I({\bf n}^*),
\end{equation}
where the functional $H$ is given by 
\begin{equation} \label{2.106} 
H({\bf v}) := \int_{\Omega} |\nabla {\bf v}|^2 +2t{\bf v}\cdot \nabla \times {\bf v} -\left(|\nabla {\bf n}^*|^2+2t{\bf n}^* \cdot \nabla \times {\bf n}^*\right)|{\bf v}|^2\,dx.
\end{equation}
$H$ takes functions from $\mathcal{B}$ which is the set given by 
\begin{equation} \label{2.107}
 \mathcal{B} :=\left\{\, \left.{\bf v}\in W^{1,2}\left(\Omega,\mathbb{R}^3\right) \,\right|\,{\bf v}|_{z=0}=0,\,\,{\bf v}|_{z=1}=0,\,\,{\bf v}|_{x=-L_1}={\bf v}|_{x=L_1},\,\, {\bf v}|_{y=-L_2}={\bf v}|_{y=L_2}\right\}.
\end{equation}
 \item Secondly we will show that if $t$ is small enough then for all $\vv \in \mathcal{B}$
\begin{equation}\label{2.108}
 H({\bf v})\geqslant \gamma_{t}\int_{\Omega} |\nabla {\bf v}|^2\,dx,
\end{equation}
for some $\gamma_{t}>0$.
\end{itemize}

\subsubsection*{$I(\vn)-I(\vn^*)$ Calculation}

Take an arbitrary ${\bf n} \in \mathcal{A}$ then 
\begin{equation} \label{2.109}
 \int_{\Omega} |\nabla {\bf n} -\nabla {\bf n}^*|^2 \,dx = \int_{\Omega} | \nabla {\bf n}|^2 +| \nabla {\bf n}^*|^2 -2\nabla {\bf n} : \nabla {\bf n}^* \,dx.
\end{equation}
Now we can use the Euler-Lagrange equation to rewrite the final term in the relation above. The strong form of the Euler-Lagrange equation which is satisfied by our function $\vn^*$ is 
\begin{equation}\label{2.109.1}
\Delta {\bf n}^*-2t\nabla\times{\bf n}^*+\left(|\nabla{\bf n}^*|^2+2t{\bf n}^*\cdot\nabla\times{\bf n}^*\right){\bf n}^*=0.
\end{equation}
We multiply this equation by $\vn$ and then we can use integration by parts to rewrite the equation as follows
\begin{equation}\label{2.109.2}
\begin{split}
 0&= \int_\Omega \vn \cdot \Delta \vn^* -2t\vn \cdot \nabla \times \vn^* + \lambda(\vn\cdot\vn^*)\,dx \\
  & = \int_\Omega \vn_i\vn^*_{i,33} - 2t\vn \cdot \nabla \times \vn^* + \lambda (\vn \cdot \vn^*)\,dx \\
 & = \int_\Omega -\vn_{i,3}\vn^*_{i,3} - 2t\vn \cdot \nabla \times \vn^* + \lambda (\vn \cdot \vn^*)\,dx \\
 & = \int_\Omega -\nabla \vn : \nabla \vn^* - 2t \vn \cdot \nabla \times \vn^* + \lambda (\vn \cdot \vn^*)\,dx,
\end{split}
\end{equation}
where $\lambda := |\nabla \vn^*|^2 + 2t\vn^* \cdot \nabla \times \vn^*$. We know that the integration by parts holds in the above calculation due to the definitions of $\vn$ and $\vn^*$. 
\begin{equation} \label{2.112}
 \vn_i\vn_{i,3}^*(0) = \vn^*_{1,3}(0)=\left. -t \sin\left(tz\right)\cos\,\theta\,-\theta ' \cos \left(tz\right)\sin\,\theta \, \right|_{z=0} =0,
\end{equation}
and similarly
\begin{equation} \label{2.113}
 \vn_i\vn_{i,3}^*(1) = \vn_{3,3}^*(1) =\left. \theta ' \cos \, \theta \, \right|_{z=1} =0.
\end{equation}
Thus \eqref{2.109} can be written as 
\begin{equation} \label{2.114}
  \int_{\Omega} |\nabla {\bf n} -\nabla {\bf n}^*|^2 \,dx = \int_{\Omega} | \nabla {\bf n}|^2 +| \nabla {\bf n}^*|^2 +4t\vn \cdot \nabla \times \vn^* -2\lambda (\vn \cdot \vn^*) \,dx.
\end{equation}
Using the facts that $2(1-{\bf n}\cdot {\bf n}^*)=|{\bf n}-{\bf n}^*|^2$ and that $I({\bf n}^*)=\int_{\Omega} \lambda+t^2 \,dx$, we deduce the following calculations:
\begin{equation} \label{2.116}
 \begin{split}\int_{\Omega} |\nabla {\bf n} -\nabla {\bf n}^*|^2 \,dx = & \int_{\Omega} | \nabla {\bf n}|^2 +| \nabla {\bf n}^*|^2 +4t{\bf n} \cdot \nabla \times {\bf n}^*
 -2\lambda {\bf n}\cdot {\bf n}^* \,dx \\
 = & \int_{\Omega} | \nabla{\bf n}|^2 +| \nabla {\bf n}^*|^2 +4t{\bf n} \cdot \nabla \times {\bf n}^* +2\lambda(1- {\bf n}\cdot {\bf n}^*) -2\lambda \,dx \\
 = & \int_{\Omega} | \nabla {\bf n}|^2 +| \nabla {\bf n}^*|^2 +4t{\bf n} \cdot \nabla \times {\bf n}^* +\lambda |{\bf n}-{\bf n}^*|^2 +2t^2\,dx -2I({\bf n}^*) \\
 = & I({\bf n})+I({\bf n}^*) +2t\int_{\Omega} 2({\bf n}\cdot \nabla \times {\bf n}^*) -({\bf n}\cdot \nabla \times {\bf n}) - ({\bf n}^* \cdot \nabla \times {\bf n}^*)\\  &+ \lambda |{\bf n}-{\bf n}^*|^2 \,dx -2I({\bf n}^*) \\
 = & I({\bf n})-I({\bf n}^*) +2t\int_{\Omega} 2({\bf n}\cdot \nabla \times {\bf n}^*) -({\bf n}\cdot \nabla \times {\bf n}) - ({\bf n}^* \cdot \nabla \times {\bf n}^*) \,dx \\  &+ \int_{\Omega} \lambda |{\bf n}-{\bf n}^*|^2 \,dx .
\end{split}
\end{equation}
Looking at the central integral in the equation above, we can rearrange the expression to get
\begin{equation} \label{2.117}
\begin{split} 
 &\int_{\Omega} 2({\bf n}\cdot \nabla \times {\bf n}^*) -({\bf n}\cdot \nabla \times {\bf n}) - ({\bf n}^* \cdot \nabla \times {\bf n}^*) \,dx \\ 
        = & \int_{\Omega} \left[ ({\bf n}-{\bf n}^*)\cdot \nabla \times ({\bf n}^*-{\bf n})\right] +({\bf n}\cdot \nabla \times {\bf n}^*)-({\bf n}^*\cdot \nabla \times {\bf n}) \,dx .
\end{split}
\end{equation}
Then using integration by parts we find that
\begin{equation} \label{2.118}
 \int_{\Omega} {\bf n} \cdot \nabla \times {\bf n}^* \,dx=\int_{\Omega} \vn_i \epsilon_{ijk}\vn_{k,j}^* = \int_{\partial \Omega} \vn_i \vn_k^* \epsilon_{ijk} \bm\nu^j \,dS - 
\int_{\Omega} \vn_k^*\epsilon_{ijk} \vn_{i,j}\,dx =
\int_{\Omega} {\bf n}^* \cdot \nabla \times{\bf n} \,dx.
\end{equation}
The surface integral is easily seen to be zero by applying the boundary conditions from the set $\mathcal{A}$. We are now in a position to conclude the calculation by combining the 
final line of \eqref{2.116}, \eqref{2.117} and \eqref{2.118}:
\begin{equation} \label{2.119}
 \int_{\Omega} |\nabla {\bf n}-\nabla {\bf n}^*|^2 +2t\left[({\bf n}-{\bf n}^*)\cdot \nabla \times ({\bf n}-{\bf n}^*)\right] -\lambda|{\bf n}-{\bf n}^*|^2\,dx =
 H({\bf n}-{\bf n}^*) = I({\bf n})-I({\bf n}^*) .
\end{equation}

\subsubsection*{Integral estimate}

Now we look to find a lower bound for the integral functional $H$ and we begin by noting the following relation obtained just by simple algebra (using \eqref{2.102} and \eqref{2.104})
\begin{equation}\label{2.120}
 |\nabla {\bf n}^*|^2+2t{\bf n}^* \cdot \nabla \times {\bf n}^*=\delta_{t} -t^2+2t^2\sin^2 \theta.
\end{equation}
Thus we can rewrite $H$ in the following, more concrete, form
\begin{equation} \label{2.121} 
H({\bf v}) = \int_{\Omega} |\nabla {\bf v}|^2 +2t{\bf v}\cdot \nabla \times {\bf v} -(\delta_{t} -t^2+2t^2\sin^2 \theta)|{\bf v}|^2\,dx.
\end{equation}
Firstly we infer that $0<\delta_{t}\leqslant \frac{\pi^2}{4}$. This is clearly true because looking at \eqref{2.104} simply shows that $\theta'^2>\delta_{t}$. Hence if $\delta_{t}> \frac{\pi^2}{4}$ then necessarily $\theta(1)>\frac{\pi}{2}$. It is also an elementary fact that $|\nabla \times {\bf v}| \leqslant \sqrt{2}|\nabla {\bf v}|$, so 
\begin{equation} \label{2.122}
\begin{split}
H({\bf v}) & \geqslant  \int_{\Omega} |\nabla {\bf v}|^2 +2t{\bf v}\cdot \nabla \times {\bf v} - \left(\frac{\pi^2}{4}+t^2\right)|{\bf v}|^2\,dx \\
        & \geqslant  \int_{\Omega} |\nabla {\bf v}|^2 -2t|{\bf v}||\nabla \times {\bf v}| - \left(\frac{\pi^2}{4}+t^2\right)|{\bf v}|^2\,dx \\
  & \geqslant  \int_{\Omega} |\nabla {\bf v}|^2 -2t\sqrt{2}|{\bf v}||\nabla {\bf v}| - \left(\frac{\pi^2}{4}+t^2\right)|{\bf v}|^2\,dx.
\end{split}
\end{equation}
From here we can use Cauchy's inequality, $ab\leqslant \frac{1}{2}(\epsilon a^2 +\frac{b^2}{\epsilon})$, with $\epsilon = 4t$ to obtain 
\begin{equation} \label{2.123}
 H({\bf v}) \geqslant \int_{\Omega} |\nabla {\bf v}|^2 \left(1-\frac{1}{2\sqrt{2}}\right)-\left(\frac{\pi^2}{4}+t^2+4\sqrt{2}t^2\right)|{\bf v}|^2\,dx.
\end{equation}
As can be seen by the argument below, $\epsilon=4t$ is not the optimal value to use in Cauchy's inequality for proving the global minimum property for the widest range of $t$ values. A simple calculation shows that $\epsilon = \left( \frac{4\sqrt{2}}{3} + \frac{2\sqrt{11}}{3} \right) t  \approx 4.09 t$ is in fact the optimal constant but we use $4t$ for ease here. Having established \eqref{2.123} we apply the Poincar\'{e} inequality with the sharp constant $\pi^2$, to deduce that 
\begin{equation}\label{2.126}
\begin{split}
 H({\bf v}) &\geqslant \left[ \pi^2\left( 1-\frac{1}{2\sqrt{2}}\right) -\left(\frac{\pi^2}{4}+t^2+4\sqrt{2}t^2\right)\right] \int_{\Omega} |{\bf v}|^2\,dx\\
 &=:\gamma_{t}\int_{\Omega} |{\bf v}|^2\,dx\\
 &\approx \left(3.91-6.65t^2\right)\int_{\Omega}|{\bf v}|^2\,dx.
\end{split}
\end{equation}

\subsubsection*{Conclusion}

We are now in a position to conclude the proof by drawing the two strands together. By taking an arbitrary ${\bf n}\in \mathcal{A}$ we know that 
\begin{equation}\label{2.127}
 I({\bf n})-I({\bf n}^*)=H({\bf n}-{\bf n}^*)\geqslant\gamma_{t}\int_{\Omega} |{\bf n}-{\bf n}^*|^2\,dx.
\end{equation}
The definition of $\gamma_{t}$ shows that if $t\leqslant 0.766$ then $\gamma_{t}>0$. This means that there exists some $t^*>0$ such that if $t<t^*$, ${\bf n}^*$ is indeed the unique global minimiser of the problem.

\end{proof}

Theorem \ref{2:theorem_global_min} is a very interesting, if slightly unsurprising, result. For nematic liquid crystals, using the one constant approximation results in the very simple Lagrangian $|\nabla \vn|^2$. Clearly if we studied the nematic problem in isolation with the same boundary conditions we would be able to conclude that the function 
\begin{equation}\label{2.128}
 \left( \begin{array}{c} \cos\left( \frac{\pi z}{2} \right) \\ 0 \\ \sin \left( \frac{\pi z}{2} \right) \end{array} \right),
\end{equation}
would be the unique global minimum. Theorem \ref{2:theorem_global_min} proves that this behaviour propagates into cholesteric liquid crystals for long pitch lengths. This is the first time that cholesteric liquid crystals with long pitch lengths have been analytically shown to have similar qualitative behaviour to nematic liquid crystals. In the final result of this section we prove an interesting result which shows that the combination of the Lagrangian our particular set of admissible functions $\mathcal{A}$, \eqref{1.3}, removes any contribution of the saddle splay term.

\begin{proposition}\label{3: prop periodicity}

Let $\vn \in \mathcal{A}$. Then
\begin{equation}\label{eq:5.25}
 \int_{\Omega} {\rm{tr}}\left(\nabla \vn^2\right) - \left( \nabla \cdot \vn \right)^2 \, dx=0.
\end{equation}

\end{proposition}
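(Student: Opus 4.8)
The plan is to recognise the saddle-splay integrand as a null Lagrangian — a pointwise divergence — and then to show that every boundary term it produces is annihilated by the structure of $\mathcal{A}$. First I would record the classical pointwise identity, valid for any smooth $\vn$,
\begin{equation}\label{prop:nulllag}
 {\rm tr}\left(\nabla\vn^2\right)-\left(\nabla\cdot\vn\right)^2 = \nabla\cdot\left[\left(\vn\cdot\nabla\right)\vn-\left(\nabla\cdot\vn\right)\vn\right],
\end{equation}
whose verification is a short direct computation in which the only non-elementary input is the symmetry of second derivatives, $n_{k,jk}=n_{k,kj}$. Writing $\vv[\vn]$ for the vector field in square brackets, the divergence theorem then gives $\int_\Omega {\rm tr}(\nabla\vn^2)-(\nabla\cdot\vn)^2\,dx=\int_{\partial\Omega}\vv[\vn]\cdot\bm\nu\,dS$ for smooth $\vn$.

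The key observation is that on each face of the cuboid, whose unit normal is $\pm{\bf e}_k$ for some $k$, the normal-derivative contributions to $\vv[\vn]\cdot\bm\nu = n_jn_{k,j}-n_kn_{j,j}$ cancel, so that $\vv[\vn]\cdot\bm\nu$ depends only on the restriction of $\vn$ to that face and on the derivatives of $\vn$ tangential to it. On the lateral pairs $\{x=\pm L_1\}$ and $\{y=\pm L_2\}$ the periodicity imposed in $\mathcal{A}$ forces $\vn$, and hence its tangential derivatives, to agree on the two faces of each pair, while the outward normals are opposite; the two contributions therefore cancel, and note that only periodicity of $\vn$ itself, not of $\nabla\vn$, is used. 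On $\{z=0\}$ and $\{z=1\}$ the field $\vn$ is the constant ${\bf e}_1$, respectively ${\bf e}_3$, so all its tangential derivatives vanish identically and these faces contribute nothing. Hence $\int_\Omega {\rm tr}(\nabla\vn^2)-(\nabla\cdot\vn)^2\,dx=0$ for every smooth admissible $\vn$.

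It remains to remove the smoothness hypothesis, and here I would exploit the fact that neither \eqref{prop:nulllag} nor the integrand uses the constraint $|\vn|=1$: it suffices to prove the vanishing on the affine space $\mathcal{A}':=\{\vn\in W^{1,2}(\Omega,\mathbb{R}^3)\,:\,\vn\text{ satisfies the boundary conditions of }\mathcal{A}\}\supset\mathcal{A}$. The map $\vn\mapsto\int_\Omega {\rm tr}(\nabla\vn^2)-(\nabla\cdot\vn)^2\,dx$ is a bounded quadratic form on $W^{1,2}(\Omega,\mathbb{R}^3)$, hence continuous, and smooth maps obeying the (linear) boundary conditions are dense in $\mathcal{A}'$ for the $W^{1,2}$ norm: writing $\vn=\vn_0+\vw$ with $\vn_0$ a fixed smooth element of $\mathcal{A}'$ (e.g. the linear interpolant $(1-z){\bf e}_1+z{\bf e}_3$) and $\vw$ in the space $\mathcal{B}$ of \eqref{2.107}, one extends $\vw$ periodically in $x$ and $y$ and mollifies, rescaling slightly towards the interior in $z$ to preserve the vanishing traces at $z=0,1$. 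Continuity of the quadratic form then carries the identity from this dense smooth subset to all of $\mathcal{A}'$, and in particular to every $\vn\in\mathcal{A}$. I expect this last step to be the main obstacle: elements of $\mathcal{A}$ have only $W^{1,2}$ regularity, and one cannot circumvent this by approximating with smooth $\mathbb{S}^2$-valued maps, since such maps need not be dense in $W^{1,2}(\Omega,\mathbb{S}^2)$ when $\Omega$ is three-dimensional; passing to the unconstrained affine space $\mathcal{A}'$ and using continuity of the quadratic form is exactly what sidesteps this. By comparison, the face-by-face cancellation is a routine computation once \eqref{prop:nulllag} is in hand.
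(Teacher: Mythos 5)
Your proposal is correct and follows essentially the same route as the paper's own proof: identify the saddle-splay integrand as a divergence, apply the divergence theorem, observe that the boundary integrand on each face involves only tangential derivatives (so the lateral faces cancel in periodic pairs and the faces $z=0,1$ contribute nothing since $\vn$ is constant there), and then extend from smooth functions to $W^{1,2}$ by passing to the unconstrained affine space $\mathcal{A}'$ and using density together with continuity of the quadratic form. Your explicit remarks on why one must drop the $\mathbb{S}^2$ constraint before approximating, and on needing only periodicity of $\vn$ rather than of $\nabla\vn$, match the paper's reasoning exactly.
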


\begin{proof}

To avoid technical issues over density of smooth functions in Sobolev spaces between manifolds we will show the slightly stronger statement that the following integral is zero
\begin{equation} \label{eq:5.26}
 J({\bf n}):=\int_{\Omega}{\rm{tr}}\left(\nabla \vn^2\right) - \left( \nabla \cdot \vn \right)^2\,dx,
\end{equation}
for all unconstrained functions
\begin{equation}\label{eq:5.26b}
 {\bf n}\in \mathcal{A}':=\left\{\,\,\left. {\bf v} \in W^{1,2}\left(\Omega,\mathbb{R}^3\right)\, \right|\,{\bf v}|_{z=0}= {\bf e}_1,\,\,{\bf v}|_{z=1}= {\bf e}_3,\,\,{\bf v}|_{x=-L_1}=
{\bf v}|_{x=L_1} ,\,\,{\bf v}|_{y=-L_2}= {\bf v}|_{y=L_2}
\,\,\right\}.
\end{equation}
 First we look to show that expression \eqref{eq:5.26} is zero for a dense subset $\mathcal{B}$, of $\mathcal{A}'$.
\begin{equation} \label{eq:5.27}
 \mathcal{B}:= \left\{ \left. {\bf v}\in C^{\infty}\left(\Omega,\mathbb{R}^3\right) \,\right|\,{\bf v}|_{z=0}={\bf e}_1,\,\,{\bf v}|_{z=1}={\bf e}_3,\,\,{\bf v}|_{x=-L_1}={\bf v}|_{x=L_1},\,\, {\bf v}|_{y=-L_2}={\bf v}|_{y=L_2}\right\}.
\end{equation}
So we take an arbitrary element ${\bf v}\in \mathcal{B}$ and we observe that since we are in the special case of a rectangular domain, the periodic boundary conditions imply that for 
arbitrary $y\in(-L_2,L_2)$, $z\in (0,1)$ (differentiability up to the boundary is ensured by \cite[p.705]{ball1984differentiability}.)
\begin{equation} \label{eq:5.28}
 \frac{\partial {\bf v}}{\partial y} (-L_1,y,z)= \lim_{h\rightarrow 0} \frac{{\bf v}(-L_1,y+h,z)-{\bf v}(-L_1,y,z)}{h} = \lim_{h\rightarrow 0} \frac{{\bf v}(L_1,y+h,z)-{\bf v}(L_1,y,z)}{h} =  
\frac{\partial {\bf v}}{\partial y} (L_1,y,z).
\end{equation}
By very similar arguments we conclude the following set of equalities
\begin{equation} \label{eq:5.29}
 \begin{split} \left.\frac{\partial {\bf v}}{\partial y}\right|_{x=-L_1}=\left. \frac{\partial {\bf v}}{\partial y}\right|_{x=L_1}, \quad &
\left. \frac{\partial {\bf v}}{\partial z}\right|_{x=-L_1}= \left.\frac{\partial {\bf v}}{\partial z}\right|_{x=L_1}, \\ 
 \left. \frac{\partial {\bf v}}{\partial x}\right|_{y=-L_2}=\left. \frac{\partial {\bf v}}{\partial x}\right|_{y=L_2},\quad & \left.\frac{\partial {\bf v}}{\partial z}\right|_{y=-L_2}=\left. 
\frac{\partial {\bf v}}{\partial z}\right|_{y=L_2}.
 \end{split}
\end{equation}
This puts us in a position where we can calculate the integral; we rearrange the saddle-splay term into a divergence and use the divergence theorem to find
\begin{equation} \label{eq:5.30}
\begin{split} 
J({\bf v}) =& \int_\Omega {\rm{tr}}\left(\nabla \vv^2\right) - \left( \nabla \cdot \vv \right)^2\, dx \\
=& \int_\Omega \nabla \cdot \left( (\vv \cdot \nabla)\vv-(\nabla \cdot \vv)\vv \right) \, dx \\
=&\int_{\partial\Omega} (({\bf v}\cdot \nabla){\bf v}-(\nabla \cdot {\bf v}){\bf v})\cdot {\bf N} dS \\
              =&\int_{x=-L_1} (({\bf v}\cdot \nabla){\bf v}-(\nabla \cdot {\bf v}){\bf v})\cdot (-{\bf e}_1) +
\int_{x=L_1} (({\bf v}\cdot \nabla){\bf v}-(\nabla \cdot {\bf v}){\bf v})\cdot {\bf e}_1 \\
&+\int_{y=-L_2} (({\bf v}\cdot \nabla){\bf v}-(\nabla \cdot {\bf v}){\bf v})\cdot (-{\bf e}_2)+
\int_{y=L_2} (({\bf v}\cdot \nabla){\bf v}-(\nabla \cdot {\bf v}){\bf v})\cdot {\bf e}_2  \\
&+\int_{z=0} (({\bf v}\cdot \nabla){\bf v}-(\nabla \cdot {\bf v}){\bf v})\cdot (-{\bf e}_3)+
\int_{z=1}(({\bf v}\cdot \nabla){\bf v}-(\nabla \cdot {\bf v}){\bf v})\cdot {\bf e}_3.
\end{split} 
\end{equation}
Where, as usual, ${\bf e}_i$ represent the usual $\mathbb{R}^3$ basis vectors. In order to deduce that each of the above lines in the previous equation are equal to zero we note that 
\begin{equation} \label{eq:5.31}
\begin{split} (({\bf v}\cdot \nabla){\bf v}-(\nabla \cdot {\bf v}){\bf v})\cdot {\bf e}_1 = \vv_2 \vv_{1,2}+\vv_3 \vv_{1,3} - \vv_1 \vv_{2,2} - \vv_1 \vv_{3,3} \\ 
       (({\bf v}\cdot \nabla){\bf v} - (\nabla \cdot {\bf v}){\bf v}) \cdot {\bf e}_2 = \vv_1 \vv_{2,1}+\vv_3 \vv_{2,3} - \vv_2 \vv_{1,1} - \vv_2 \vv_{3,3} \\ 
(({\bf v}\cdot \nabla){\bf v}-(\nabla \cdot {\bf v}){\bf v})\cdot {\bf e}_3 = \vv_1 \vv_{3,1}+\vv_2 \vv_{3,2} - \vv_3 \vv_{1,1} - \vv_3 \vv_{2,2}.
\end{split}
\end{equation}
The first two equalities in \eqref{eq:5.31} show that the first two lines of \eqref{eq:5.30} are indeed zero by utilising \eqref{eq:5.29}. Finally we note that since ${\bf v}$ is prescribed and constant in the $z=0,1$ faces we must have that 
\begin{equation} \label{eq:5.32}
\left. \frac{\partial {\bf v}}{\partial x}\right|_{z=0}=\left. \frac{\partial {\bf v}}{\partial y} \right|_{z=0}=\left.\frac{\partial {\bf v}}{\partial x}\right|_{z=1}=
\left.\frac{\partial {\bf v}}{\partial y}\right|_{z=1}=0,
\end{equation}
meaning that the final line in \eqref{eq:5.30} is zero as both of the integrands are identically zero. So we have that
\begin{equation}\label{eq:5.32b}
 J({\bf v})=0\quad\forall \, {\bf v}\in \mathcal{B}.
\end{equation}
So now if we take an arbitrary element ${\bf n}\in \mathcal{A}.$, the density of $\mathcal{B}$ implies that 
\begin{equation} \label{eq:5.33}
 \exists ({\bf v}^j) \subset \mathcal{B}\,\,\text{such}\,\,\text{that}\,\,||{\bf v}^j - {\bf n}||_{1,2} \longrightarrow 0 \,\,as\,\,j\rightarrow \infty.
\end{equation}
With this information it is clear when writing \eqref{eq:5.26} in component form and using the fact that 
\begin{equation}\label{eq:5.34}
f^j\rightarrow f\,\, \text{in}\,\, L^2 \quad \text{and} \quad  g^j\rightarrow g\,\, \text{in}\,\, L^2 \quad \Rightarrow \quad f^j g^j \rightarrow fg \quad in\quad L^1,
\end{equation}
we deduce the convergence of the integral values so that
\begin{equation} \label{eq:5.35}
 0=J({\bf v}^j)\longrightarrow J({\bf n}).
\end{equation}
Therefore $J({\bf n})=0$ for all ${\bf n}\in\mathcal{A}'$ and $\mathcal{A}\subset\mathcal{A}'$, so the statement is proved.

\end{proof}

This proposition indicates that there may be a subtle flaw somewhere in our modelling of the cholesteric liquid crystal problem. We know that there are periodic structures that can form in cholesteric liquid crystals which make good energetic use of the saddle-splay term (blue phases for example). However the function space choice of $W^{1,2}\left(\Omega ,\mathbb{S}^2 \right)$ would not allow us to be able to predict such a structure. This issue will be explored in depth a subsequent paper \cite{bedford2014function} where we show that considering director fields in SBV$(\Omega ,\mathbb{S}^2)$ more easily allows for the prediction of multi-dimensional cholesteric structures.

%
%
%
% Local minimiser results for alternative problem
%
%
%

\section{Alternative problem}\label{section: second problem}

Up to this point we have exclusively focused our attentions on the problem with perpendicular boundary conditions applied to the top and bottom faces of the cell. However a more commonly studied problem is that of homeotropic boundary conditions on both faces. In this final section we show that much of our analysis from Theorem \ref{2:theorem_global_min} can be easily applied to such a situation. Therefore we consider the problem of minimising
\begin{equation}\label{10.1}
 I(\vn) = \int_\Omega |\nabla \vn|^2 + 2t \vn \cdot\nabla\times \vn + t^2\, dx
\end{equation}
over 
\begin{equation}\label{10.2}
 \mathcal{A}:= \left\{ \, \left. \vn \in W^{1,2}\left( \Omega , \mathbb{S}^2 \right)\, \right| \, \left. \vn \right|_{z=0} = \left. \vn \right|_{z=1} = {\bf e}_3,\,\,\vn|_{x=-L_1} = \vn|_{x=L_1},\,\, \vn|_{y=-L_2} = \vn|_{y=L_2}\, \, \right\}.
\end{equation}
This problem is much more approachable in that for every value of $t$ we have a simple solution of the Euler-Lagrange equation. The constant function $\vn = {\bf e}_3$ satisfies 
\begin{equation}\label{10.3}
 \Delta \vn -2t\nabla \times \vn +\vn \left( |\nabla \vn|^2 + 2t\vn\cdot\nabla \times \vn \right) = 0,
\end{equation}
for any $t$. Accordingly this simplifies much of the analysis to deduce whether this a global or local minimiser. This constant state is often called the unwound cholesteric state because of its deviation from the helical configuration
\begin{equation}\label{10.4}
 \vn(z):=\left( 
 \begin{array}{c} \cos(tz) \\ \sin(tz) \\ 0 \end{array}
 \right).
\end{equation}
The stability of the unwound state has been approached both experimentally \cite{gartland2010electric}, and with simulations. However with the local minimiser results Theorems \ref{thm: weak local min} and \ref{thm: strong local min} we can precisely quantify its stability, both locally and globally, with the two results below.

\begin{theorem}\label{2: prop global min homeotropic case}
 There exists some $t^*>0$ such that whenever $t<t^*$, the function $\vn = {\bf e}_3$ is the unique global minimiser of \eqref{10.1} over \eqref{10.2}
\end{theorem}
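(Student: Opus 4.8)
The plan is to mimic the structure of the proof of Theorem \ref{2:theorem_global_min}, since the homeotropic problem is in fact simpler: the candidate minimiser $\vn^* = {\bf e}_3$ is constant, so $\nabla \vn^* = 0$ and $\nabla \times \vn^* = 0$. First I would record that ${\bf e}_3 \in \mathcal{A}$ and that it solves the Euler--Lagrange equation \eqref{10.3} (already observed in the excerpt), and also verify that the relevant boundary terms vanish; here this is trivial since $\vn^*$ is constant. Then, exactly as in the $I(\vn)-I(\vn^*)$ calculation, I would expand $\int_\Omega |\nabla \vn - \nabla \vn^*|^2\,dx = \int_\Omega |\nabla \vn|^2\,dx$ (the cross term and $|\nabla\vn^*|^2$ both drop out), and use $2(1-\vn\cdot\vn^*) = |\vn-\vn^*|^2$ together with $I(\vn^*) = \int_\Omega t^2\,dx$. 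Since $\lambda := |\nabla\vn^*|^2 + 2t\vn^*\cdot\nabla\times\vn^* = 0$ in this case, the analogue of \eqref{2.119} becomes simply
\begin{equation*}
 \int_\Omega |\nabla(\vn-\vn^*)|^2 + 2t\,(\vn-\vn^*)\cdot\nabla\times(\vn-\vn^*)\,dx = I(\vn) - I(\vn^*),
\end{equation*}
using also $\int_\Omega \vn\cdot\nabla\times\vn^*\,dx = \int_\Omega \vn^*\cdot\nabla\times\vn\,dx$ (here both sides are zero). So the functional $H$ reduces to $H(\vv) = \int_\Omega |\nabla\vv|^2 + 2t\,\vv\cdot\nabla\times\vv\,dx$ with no zeroth-order potential term at all, over $\vv$ in the space $\mathcal{B}$ of $W^{1,2}$ maps vanishing on $z=0,1$ and periodic in $x,y$.

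Second, I would prove the coercivity estimate $H(\vv) \geqslant \gamma_t \int_\Omega |\nabla\vv|^2\,dx$ for $t$ small. This is the analogue of the integral estimate, but easier because there is no $|\vv|^2$ potential: using $|\nabla\times\vv| \leqslant \sqrt 2\,|\nabla\vv|$ gives
\begin{equation*}
 H(\vv) \geqslant \int_\Omega |\nabla\vv|^2 - 2\sqrt 2\, t\, |\vv|\,|\nabla\vv|\,dx \geqslant \int_\Omega (1 - \sqrt 2\, t\,\epsilon)|\nabla\vv|^2 - \frac{\sqrt 2\, t}{\epsilon}|\vv|^2\,dx,
\end{equation*}
after Cauchy's inequality, and then the Poincar\'e inequality $\int_\Omega |\vv|^2\,dx \leqslant \pi^{-2}\int_\Omega |\nabla\vv|^2\,dx$ (valid because $\vv$ vanishes on $z=0,1$ and $\Omega$ has height $1$) yields $H(\vv) \geqslant (1 - \sqrt 2\,t\,\epsilon - \sqrt 2\, t/(\pi^2\epsilon))\int_\Omega |\nabla\vv|^2\,dx$. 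Optimising over $\epsilon$ (take $\epsilon = 1/\pi$) gives the bracket $1 - 2\sqrt 2\, t/\pi$, which is positive for $t < \pi/(2\sqrt 2)$, so set $t^* = \pi/(2\sqrt 2)$ and $\gamma_t = 1 - 2\sqrt 2\, t/\pi > 0$.

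Finally I would conclude: for any $\vn \in \mathcal{A}$,
\begin{equation*}
 I(\vn) - I(\vn^*) = H(\vn - \vn^*) \geqslant \gamma_t \int_\Omega |\nabla(\vn-\vn^*)|^2\,dx \geqslant 0,
\end{equation*}
with equality forcing $\nabla(\vn-\vn^*) = 0$, hence $\vn - \vn^*$ constant, hence (by the boundary condition on $z=0$) $\vn = \vn^* = {\bf e}_3$; this gives uniqueness. I do not expect a serious obstacle here — the main point to be careful about is justifying the integration-by-parts identity $\int_\Omega \vn\cdot\nabla\times\vn^*\,dx = \int_\Omega\vn^*\cdot\nabla\times\vn\,dx$ and the vanishing of boundary terms under the periodic-plus-Dirichlet conditions (both of which were handled in the proof of Theorem \ref{2:theorem_global_min} and are actually trivial here since $\vn^*$ is constant), and stating the derivation of $H(\vn-\vn^*) = I(\vn) - I(\vn^*)$ cleanly given that the density-of-smooth-functions subtlety for $W^{1,2}(\Omega,\mathbb{S}^2)$ is sidestepped because all the manipulations only use that $\vn, \vn^* \in W^{1,2}(\Omega,\mathbb{R}^3)$ with $|\vn| = |\vn^*| = 1$ a.e.
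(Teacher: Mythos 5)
Your proposal is correct and follows essentially the same route as the paper: the identity $I(\vn)-I(\vn^*)=H(\vn-\vn^*)$ with $H(\vv)=\int_\Omega |\nabla\vv|^2+2t\,\vv\cdot\nabla\times\vv\,dx$ (the cross term killed by periodicity), then $|\nabla\times\vv|\leqslant\sqrt{2}\,|\nabla\vv|$, Cauchy's inequality and the Poincar\'{e} inequality with constant $\pi^2$. The only difference is that you optimise the Cauchy parameter, obtaining the slightly larger threshold $t^*=\pi/(2\sqrt{2})\approx 1.11$ versus the paper's $\pi\sqrt{2\sqrt{2}-1}/4\approx 1.06$, and you close the coercivity in $\|\nabla\vv\|_2^2$ rather than $\|\vv\|_2^2$; both choices are immaterial to the statement.
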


\begin{proof}
 
 The main steps of this proof closely resemble those of the proof of Theorem \ref{2:theorem_global_min}, however this case is much simpler because our candidate minimiser is constant. Let $\vn^*:= {\bf e}_3$, and $\vn \in \mathcal{A}$. Then
 \begin{equation}\label{7.27}
  \begin{split}
   I(\vn) - I(\vn^*) & = \int_\Omega |\nabla \vn |^2 +2t \vn \cdot \nabla \times \vn \, dx \\
   & = \int_\Omega |\nabla (\vn-\vn^*)|^2 + 2t (\vn-\vn^*) \cdot \nabla \times (\vn - \vn^*)+2t \vn^* \cdot \nabla \times \vn \, dx \\
   & = \int_\Omega |\nabla (\vn-\vn^*)|^2 + 2t (\vn-\vn^*) \cdot \nabla \times (\vn - \vn^*) \, dx\\
   & =: H(\vn - \vn^*)
  \end{split}
 \end{equation}

 The final equality holds in \eqref{7.27} because of the periodicity in the $x$ and $y$ directions for $\vn$. From here we can follow the same logic as can be found from \eqref{2.120} to \eqref{2.123} to deduce that
 \begin{equation}\label{7.28}
 \begin{split}
  I(\vn)- I(\vn^*) = H(\vn-\vn^*)  &\geqslant \int_\Omega |\nabla (\vn - \vn^*)|^2 \left( 1- \frac{1}{2\sqrt{2}} \right) -4t^2\sqrt{2} |\vn-\vn^*|^2\,dx \\ 
  & \geqslant \left( \pi^2- \frac{\pi^2}{2\sqrt{2}}-4 t^2 \sqrt{2} \right) \int_\Omega |\vn -\vn^*|^2 \,dx. \\  
 \end{split}
 \end{equation}
Therefore when $t<t^*:= \frac{\pi\sqrt{2\sqrt{2}-1}}{4}\approx 1.06$, we have that 
\begin{equation}\label{7.29}
 I(\vn)-I(\vn^*) \geqslant \gamma_t \int_\Omega |\vn - \vn^*|^2\,dx
\end{equation}

for some $\gamma_t >0$. This proves the global stability of $\vn^*$.

\end{proof}

\begin{theorem}\cite{bedford2014analysis}\label{thm: e_3 stability}
Consider the variational problem as described in \eqref{10.1} and \eqref{10.2}. If $t<\pi$ then $\vn={\bf e}_3$ is a strict strong local minimiser of $I$. If $t>\pi$ then $\vn={\bf e}_3$ is not a weak local minimiser of $I$.
\end{theorem}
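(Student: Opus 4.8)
The plan is to route everything through the two local-minimality criteria recalled in Theorems~\ref{thm: weak local min} and~\ref{thm: strong local min}: in both directions the statement reduces to the sign and coercivity of the second variation of $I$ at $\vn^*:={\bf e}_3$. Since $I$ is a quadratic functional of $\vn$ (up to the additive constant $t^2|\Omega|$) and $\vn^*$ is constant, for any $\vv\in{\rm Var}_\mathcal{A}$ the curve $\epsilon\mapsto(\vn^*+\epsilon\vv)/|\vn^*+\epsilon\vv|$ expands as $\vn^*+\epsilon\vw+O(\epsilon^2)$ with tangential part $\vw:=(\vv_1,\vv_2,0)$, which still vanishes on $z=0,1$ and is periodic in $x,y$. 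A direct computation (the first-variation and the $\epsilon^2$ cross-term both contain a factor $\int_\Omega{\bf e}_3\cdot\nabla\times(\cdot)\,dx$ that vanishes by periodicity in $x$ and $y$, exactly as in~\eqref{7.27}) gives that the first variation is zero and
\[
\left.\frac{d^2}{d\epsilon^2}I\!\left(\frac{\vn^*+\epsilon\vv}{|\vn^*+\epsilon\vv|}\right)\right|_{\epsilon=0}=2H(\vw),\qquad H(\vw)=\int_\Omega|\nabla\vw|^2+2t\,\vw\cdot\nabla\times\vw\,dx,
\]
where the $\lambda|\vw|^2$ term of~\eqref{2.106} is absent because $\vn^*$ is constant.

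The key step is a ``gauge'' computation diagonalising $H$. Writing $\vw=(w_1,w_2,0)$, only the $z$-derivatives couple to the chiral term: $\vw\cdot\nabla\times\vw=w_2\partial_z w_1-w_1\partial_z w_2$. Setting $u:=w_1+iw_2$ one has $\vw\cdot\nabla\times\vw=-\,{\rm Im}(\bar u\,\partial_z u)$ and the pointwise identity $|\partial_z u|^2-2t\,{\rm Im}(\bar u\,\partial_z u)=|\partial_z u-itu|^2-t^2|u|^2=|\partial_z(e^{-itz}u)|^2-t^2|u|^2$. Hence, with $v:=e^{-itz}u$ (which again vanishes at $z=0,1$, is periodic in $x,y$, and satisfies $|v|=|u|=|\vw|$),
\[
H(\vw)=\int_\Omega|\partial_x v|^2+|\partial_y v|^2+|\partial_z v|^2-t^2|v|^2\,dx,
\]
and $\|v\|_{1,2}$ and $\|\vw\|_{1,2}$ are equivalent norms with constants depending only on $t$.

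For $t<\pi$ I would apply the one-dimensional Poincaré inequality in $z$ with its sharp Dirichlet constant, $\int_0^1|\partial_z v|^2\,dz\geqslant\pi^2\int_0^1|v|^2\,dz$, to a fraction $\eta_t:=(\pi^2-t^2)/(2\pi^2)\in(0,1)$ of $\int_\Omega|\partial_z v|^2$, which leaves $\int_\Omega\big(|\partial_x v|^2+|\partial_y v|^2+\eta_t|\partial_z v|^2\big)+\tfrac12(\pi^2-t^2)|v|^2\,dx$; hence $H(\vw)\geqslant m_t\|v\|_{1,2}^2\geqslant\tfrac{\delta}{2}\|\vw\|_{1,2}^2$ for suitable $m_t,\delta>0$. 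So the second variation is $\geqslant\delta\|\vv-(\vv\cdot\vn^*)\vn^*\|_{1,2}^2$ for every $\vv\in{\rm Var}_\mathcal{A}$, and since $\vn^*\in\mathcal{A}\cap C^1(\overline\Omega)$, Theorem~\ref{thm: strong local min} yields that $\vn^*={\bf e}_3$ is a strict strong local minimiser. For $t>\pi$ I would exhibit the explicit $z$-dependent competitor $\vv=\vw=(\sin(\pi z)\cos(tz),\sin(\pi z)\sin(tz),0)\in{\rm Var}_\mathcal{A}$, for which $u=\sin(\pi z)e^{itz}$, so $v=\sin(\pi z)$ and $H(\vw)=4L_1L_2\int_0^1\pi^2\cos^2(\pi z)-t^2\sin^2(\pi z)\,dz=2L_1L_2(\pi^2-t^2)<0$; the second variation is then strictly negative, which by Theorem~\ref{thm: weak local min} rules out $\vn^*$ being even a weak local minimiser.

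The main obstacle is spotting the substitution $v=e^{-itz}u$: it is what converts the indefinite-looking form $H$ into a clean Dirichlet-type quadratic form and produces the exact threshold $t=\pi$, the first Dirichlet eigenvalue of $-d^2/dz^2$ on $(0,1)$; any cruder estimate — e.g. bounding $\vw\cdot\nabla\times\vw$ by $\sqrt2\,|\vw||\nabla\vw|$ and using Cauchy's inequality as in~\eqref{2.122}–\eqref{2.126} — only gives a non-sharp range of $t$. The rest is bookkeeping: checking the competitor lies in ${\rm Var}_\mathcal{A}$, that the $v$–$\vw$ norm equivalence has positive ($t$-dependent) constants, and that the first variation really does vanish by periodicity so the two criteria of Section~\ref{section: prelims} apply.
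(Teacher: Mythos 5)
The paper does not actually prove this theorem: it is imported verbatim from \cite{bedford2014analysis}, so there is no in-paper argument to compare yours against. Judged on its own terms, your proof is correct and, as far as I can check, complete: the reduction of the second variation at the constant state to $2H(\vw)$ with $\vw=\vv-(\vv\cdot{\bf e}_3){\bf e}_3$ is right (both the first variation and the $\epsilon^2$ cross-term reduce to $t\int_\Omega {\bf e}_3\cdot\nabla\times(\cdot)\,dx=\int_\Omega \partial_x(\cdot)_2-\partial_y(\cdot)_1\,dx$, which vanishes by lateral periodicity, and $\nabla{\bf e}_3=0$ kills everything else); the identity $\vw\cdot\nabla\times\vw=-\,{\rm Im}(\bar u\,\partial_z u)$ and the gauge substitution $v=e^{-itz}u$ check out, as does the competitor computation $H(\vw)=2L_1L_2(\pi^2-t^2)$ for $t>\pi$. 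The closest argument the paper itself contains is Theorem \ref{2: prop global min homeotropic case}, which estimates the same quadratic form $H$ by the crude chain $|\vw\cdot\nabla\times\vw|\leqslant\sqrt2\,|\vw||\nabla\vw|$ plus Cauchy and Poincar\'e as in \eqref{2.122}--\eqref{2.126}, and therefore only reaches $t\lesssim 1.06$; your completion of the square in the $z$-derivative is exactly what is needed to hit the sharp threshold $t=\pi$, and your negative-direction computation shows that threshold is attained. In that sense your argument also explains, within the paper's own framework, why the gap between the global bound $1.06$ and the local bound $\pi$ discussed after Theorem \ref{thm: e_3 stability} arises.

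Two cosmetic points worth tidying. First, Theorem \ref{thm: strong local min} as quoted in Section \ref{section: prelims} concludes only ``strong local minimiser'', not ``\emph{strict} strong local minimiser''; if you want strictness you should say a word about why the coercive bound $\delta\|\vv-(\vv\cdot\vn)\vn\|_{1,2}^2$ forces $I(\vm)>I(\vn^*)$ for $\vm\neq\vn^*$ nearby (or just cite the sharper form of the sufficiency theorem). Second, your sentence about applying Poincar\'e ``to a fraction $\eta_t$ of $\int|\partial_z v|^2$'' is worded backwards --- you apply it to the fraction $1-\eta_t$ and retain $\eta_t\int|\partial_z v|^2$ --- but the displayed leftover
\begin{equation}
\int_\Omega\bigl(|\partial_x v|^2+|\partial_y v|^2+\eta_t|\partial_z v|^2\bigr)+\tfrac12(\pi^2-t^2)|v|^2\,dx
\end{equation}
is the correct one, since $(1-\eta_t)\pi^2-t^2=\tfrac12(\pi^2-t^2)$. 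Neither point affects the validity of the argument.
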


\begin{remark*}
 As was mentioned in \cite{bedford2014analysis} it is unknown whether the constant state ${\bf e}_3$ is a local minimiser or not at the critical value of $t=\pi$. The problem smoothly depends on the cholesteric twist $t$, however unless we can show that ${\bf e}_3$ is a global minimiser up to $t=\pi$ we know of no way to deduce its stability at the bifurcation point.
\end{remark*}

These results tie in with Theorem \ref{2:theorem_global_min} in that they too demonstrate how cholesteric liquid crystals are related to nematic liquid crystals for long pitch lengths. It is immediately obvious that if we studied nematic liquid crystals with the set of admissible functions as given in \eqref{10.2}, the unique global minimiser is $\vn = {\bf e}_3$. According to Theorem \ref{2: prop global min homeotropic case}, this property propagates into the cholesteric setting for small values of $t$. Theorem \ref{thm: e_3 stability} extends this result and shows that ${\bf e}_3$ is a strong local minimiser for the problem up to $t=\pi$, but not a weak local minimum thereafter. It is interesting to note that $\vn = {\bf e}_3$ is a global minimum at least up to $t = 1.06$ and a strong local minimiser up to $t=\pi$. This obviously allows the possibility of there being a range of $t$ values where the unwound state is a local minimiser but not a global minimiser. As far as we are aware, the theorems presented in this paper are the first to show this qualitative correspondence between nematic liquid crystals and low chirality cholesteric liquid crystals

\vspace{3mm}

To the best of our knowledge it is still an open problem to analytically prove the existence of multi-dimensional minimisers for a specific cholesteric liquid crystal problem when the director field $\vn$ lies in the space $W^{1,2}\left( \Omega ,\mathbb{S}^2 \right)$. It may indeed be possible to prove this fact, whether analytically or numerically, using Sobolev regularity director fields. However in a forthcoming paper \cite{bedford2014function}, we will show that we can predict the existence of these structures much more easily when we relax the problem so that $\vn \in SBV^2\left(\Omega,\mathbb{S}^2 \right)$. The technique has potential because it allows the Oseen-Frank theory to better respect the head-to-tail symmetry of the molecules.

\subsection*{Acknowledgements}

This work was supported by the EPSRC Science and Innovation
award to the Oxford Centre for Nonlinear PDE (EP/E035027/1). The author is supported by CASE studentship with Hewlett-Packard Limited.

\bibliographystyle{plain}
\bibliography{refs} 

\end{document}